\documentclass[11pt]{article}
\usepackage[T1]{fontenc}
\usepackage[utf8]{inputenc}
\usepackage[english]{babel}
\usepackage{amsmath}
\usepackage{amssymb}
\usepackage{amsfonts}
\usepackage{amsthm}
\usepackage{mathtools}
\usepackage{hyperref}
\usepackage[nameinlink,noabbrev]{cleveref}
\usepackage{graphicx}
\usepackage{tikz}
\usepackage{booktabs}
\usepackage{enumitem}
\usepackage[margin=1in]{geometry}
\hypersetup{
    colorlinks=true,
    linkcolor=blue,
    citecolor=blue,
    urlcolor=blue
}

\theoremstyle{plain}

\newtheorem{theorem}{Theorem}[section]
\newtheorem{lemma}[theorem]{Lemma}
\newtheorem{proposition}[theorem]{Proposition}
\newtheorem{corollary}[theorem]{Corollary}

\theoremstyle{definition}

\newtheorem{definition}[theorem]{Definition}

\theoremstyle{remark}

\Crefname{theorem}{theorem}{theorems}
\Crefname{theorem}{Theorem}{Theorems}

\Crefname{lemma}{lemma}{lemmas}
\Crefname{lemma}{Lemma}{Lemmas}

\Crefname{proposition}{proposition}{propositions}
\Crefname{proposition}{Proposition}{Propositions}

\Crefname{corollary}{corollary}{corollaries}
\Crefname{corollary}{Corollary}{Corollaries}

\Crefname{definition}{definition}{definitions}
\Crefname{definition}{Definition}{Definitions}

\Crefname{example}{example}{examples}
\Crefname{example}{Example}{Examples}

\Crefname{remark}{remark}{remarks}
\Crefname{remark}{Remark}{Remarks}

\Crefname{conjecture}{conjecture}{conjectures}
\Crefname{conjecture}{Conjecture}{Conjectures}

\Crefname{claim}{claim}{claims}
\Crefname{claim}{Claim}{Claims}

\newcommand{\bb}[1]{\mathbb{#1}}
\DeclareMathOperator\supp{supp}

\title{Orthogonal partition by four hyperplanes}
\author{Alexey Fakhrutdinov \and Platon Khamatov \and Oleg R. Musin}
\date{}
\begin{document}
\maketitle
\numberwithin{equation}{section}

\begin{abstract}
We study orthogonal $(4,3)$-equipartitions of finite measures absolutely continuous with respect to Lebesgue measure: every three of four mutually
orthogonal affine hyperplanes must divide each measure into eight
equal parts. For every number $m$ of measures,
we determine the exact dimension threshold of Musin's polynomial
criterion and express it in terms of the binary expansion of $m$.
The threshold coincides with the geometric lower bound precisely
when $m=2^j-1$. Consequently,
$\Delta^*(2^j-1,4,3)=7\cdot2^{j-1}-2$ for every $j\geq1$.
\end{abstract}

\section{Introduction}
Let $m\geq1$ and $1\leq n\leq k$. We consider finite Borel
measures on $\bb{R}^d$ that assign zero measure to every affine
hyperplane. A \emph{$(k,n)$-equipartition} is a collection of
$k$ affine hyperplanes such that every $n$-element subfamily
divides each measure into $2^n$ equal parts. It is called
\emph{orthogonal} if the hyperplanes are mutually orthogonal.
Let $\Delta(m,k,n)$ and $\Delta^*(m,k,n)$ denote the least
dimensions in which every collection of $m$ such measures
admits these respective equipartitions; clearly,
$\Delta(m,k,n)\leq\Delta^*(m,k,n)$.

The non-orthogonal case $n=k$ is the classical
Gr\"unbaum--Hadwiger--Ramos problem
\cite{BFHZ2018,MVZ2006,Ramos1996}.
Gr\"unbaum asked whether $d$ hyperplanes always divide a
mass in $\bb{R}^d$ into $2^d$ equal parts. This holds for
$d\leq3$ and fails for $d\geq5$ by Avis \cite{Avis1984}.
Sober\'on \cite{Soberon2026} recently settled the remaining
case $d=4$ by constructing a smooth strictly positive density
that no four affine hyperplanes divide into $16$ equal parts.
Together with the upper bound in \cite{BFHZ2018}, this gives
$\Delta(1,4,4)=5$.

Partial equipartitions originate in Makeev's work on
orthogonal pairwise partitions \cite{Makeev2007} and were
formulated for general $(k,n)$ and several measures by
Blagojevi\'c and Karasev \cite{BK2012}.
Mejia, Simon, and Zhang \cite{MSZ2025} studied $n=2$ and
$(k,n)=(4,3)$; related constraints were considered by
Simon \cite{Simon2019}. See also \cite{RPS2022,Matousek2003}.

For $k=4$, the four dimension problems ask for the least
$d$ in which every collection of $m$ measures admits four
mutually orthogonal hyperplanes satisfying:
\begin{enumerate}[label=\textbf{(4,\arabic*)},leftmargin=*,nosep]
  \item each hyperplane bisects every measure;
  \item every pair divides every measure into four equal parts;
  \item every triple divides every measure into eight equal parts;
  \item all four divide every measure into sixteen equal parts.
\end{enumerate}

Musin \cite[Theorems~1.4--1.6 and Section~4.1]{Musin2026}
proved $\Delta^*(m,k,1)=m+k-1$ and established general
dimension bounds for the families $(k,2)$ and $(k,k)$,
including exact values for $(k,2)$ when $m=2^j-1$.
Thus $(4,3)$ is the only case for four hyperplanes outside
these general families. Although individual values and
powers of two were treated in \cite{Musin2026}, arbitrary
$m$ requires a separate analysis of monomial cancellations.
Here we determine the exact threshold of the polynomial
criterion for every $m$ and obtain exact geometric
dimensions for $m=2^j-1$.

For full equipartitions without the orthogonality requirement,
Ramos \cite{Ramos1996} proved the lower bound
$\lceil m(2^k-1)/k\rceil$.
Musin \cite{Musin2026} established the following extension
to orthogonal $(k,n)$-equipartitions:
for $2 \leq n \leq k$,
\begin{equation}\label{eq:geometric-lower}
 \Delta^*(m,k,n) \geq R(m,k,n)
 := \left\lceil
       \frac{m\alpha_n(k)}{k}+\frac{k-1}{2}
    \right\rceil,
 \qquad
 \alpha_n(k):=\sum_{j=1}^{n}\binom{k}{j}.
\end{equation}
Here $\alpha_n(k)$ is the number of independent equipartition
conditions for each measure, while orthogonality imposes
$\binom{k}{2}$ additional conditions.

Musin also gave a sufficient polynomial condition for the
existence of these partitions. In $\bb{F}_2[x_1,\dots,x_k]$, put
\begin{equation}\label{eq:partition-polynomial}
 Q_{k,j}:=
 \prod_{\substack{I\subseteq\{1,\dots,k\}\ |I|=j}}
 \left(\sum_{i\in I}x_i\right),
 \qquad
 P_{k,n}:=\prod_{j=1}^{n}Q_{k,j}.
\end{equation}
Thus $\deg P_{k,n}=\alpha_n(k)$.

\begin{theorem}[Musin {\cite[Theorem 1.3]{Musin2026}}]
\label[theorem]{thm:musin}
Let $m\geq1$ and $1\leq n\leq k\leq d$. If
\begin{equation}\label{eq:musin-criterion}
 P_{k,n}^{\,m}\notin
 (x_1^{d+1},x_2^d,\dots,x_k^{d-k+2}),
\end{equation}
then every collection of $m$ measures in $\bb{R}^d$ admits an
orthogonal $(k,n)$-equipartition. In particular,
$\Delta^*(m,k,n)\leq d$.
\end{theorem}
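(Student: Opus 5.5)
This is Musin's criterion, cited from \cite{Musin2026}. The natural route is the configuration-space/test-map scheme together with an equivariant obstruction computed in the cohomology of a flag manifold.

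\textbf{Configuration space.} Choose an orthonormal $k$-frame $(v_1,\dots,v_k)$ in $\bb{R}^d$, together with offsets $a_1,\dots,a_k\in\bb{R}$. These determine the oriented hyperplanes $H_i=\{x:\langle x,v_i\rangle=a_i\}$. The offsets can be absorbed by working over the Stiefel manifold $V_k(\bb{R}^d)$, or more conveniently by passing to the space of complete flags that begin with the lines $\bb{R}v_1,\dots,\bb{R}v_k$. The group $(\bb{Z}/2)^k$ acts by flipping the signs of the $v_i$ and $a_i$. The orbit space fibres over the partial flag manifold $F=\mathrm{Fl}(1,\dots,1,d-k;\bb{R}^d)$. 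Its mod-$2$ cohomology is generated by the Stiefel--Whitney classes $x_i=w_1(L_i)$ of the tautological lines. By the Borel presentation, the relations there are generated by the complete homogeneous polynomials, which gives a Gröbner basis with leading monomials $x_1^{d},x_2^{d-1},\dots$. Including the affine offset raises each exponent by one, and this produces the monomial ideal $(x_1^{d+1},x_2^d,\dots,x_k^{d-k+2})$ that appears in \eqref{eq:musin-criterion}. I would set things up so that the relevant cohomology ring is exactly $\bb{F}_2[x_1,\dots,x_k]$ modulo an ideal $J$, and $J$ is contained in this monomial ideal after the triangular change of generators.

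\textbf{Test map.} For each measure $\mu_\ell$ and each nonempty $I\subseteq\{1,\dots,k\}$ with $|I|\le n$, define
\[
\sum_{\varepsilon\in\{\pm\}^k}\Bigl(\prod_{i\in I}\varepsilon_i\Bigr)\mu_\ell\Bigl(\textstyle\bigcap_i H_i^{\varepsilon_i}\Bigr).
\]
This quantity transforms under $(\bb{Z}/2)^k$ by the character $\prod_{i\in I}\varepsilon_i$. Hence it is a section of the line bundle $\bigotimes_{i\in I}L_i$, whose first Stiefel--Whitney class is $\sum_{i\in I}x_i$. All these sections vanish simultaneously exactly when every $j$-subfamily with $j\le n$ splits each measure equally; the Fourier coefficients with $|I|\le n$ are the ones that control the $2^n$-part counts. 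Summing over $\ell$ and over the $I$ gives a vector bundle $E$ of rank $m\alpha_n(k)$ with $w_{\mathrm{top}}(E)=P_{k,n}^{\,m}$.

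\textbf{Obstruction and the main difficulty.} If $w_{\mathrm{top}}(E)\ne0$, every section has a zero, and a zero of the test section is the desired orthogonal equipartition. The main obstacle is the non-compactness coming from the affine parameters. I would handle it either by compactifying each offset to a closed interval, using the fact that near infinity a hyperplane leaves all the mass on one side so the section is nonvanishing there (which gives a relative Euler class), or by the standard trick of identifying affine hyperplanes in $\bb{R}^d$ with linear ones in $\bb{R}^{d+1}$. The second approach is exactly what shifts the exponents to $d+1,d,\dots$. One must then check that the $(\bb{Z}/2)^k$-orthogonality structure is preserved, since the frame stays orthogonal in the first $d$ coordinates. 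With that in place, the nonvanishing of $P_{k,n}^{\,m}$ modulo the flag-manifold relations is equivalent, via the Gröbner basis, to non-membership in the monomial ideal. Finally, I would pass from the test measures to general measures by a standard approximation and compactness argument.
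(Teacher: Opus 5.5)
The paper does not prove this statement; it quotes it from Musin. Your outline therefore has to stand on its own, and its decisive step fails.

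\textbf{The final step is false.} The monomial ideal $M=(x_1^{d+1},x_2^{d},\dots,x_k^{d-k+2})$ is the \emph{leading-term} ideal of the flag-manifold ideal $J=(h_{d+2-k},\dots,h_{d+1})$. A Gr\"obner basis of $J$ is $h_{d+2-j}(x_1,\dots,x_j)$, $1\le j\le k$, for lex with $x_k>\dots>x_1$. But $M$ does not contain $J$. For example, $h_{d+2-k}(x_1,\dots,x_k)\in J$ contains $x_1^{d+2-k}\notin M$ when $k\ge2$. More generally, a homogeneous ideal contained in its initial ideal equals it, by comparing Hilbert functions. So $f\notin M$ only says that $f$ has a standard monomial. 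It does not say that the normal form of $f$ modulo $J$ is nonzero, because reducing the other monomials of $f$ can cancel it. Your claimed equivalence between ``$P_{k,n}^m\neq0$ modulo the flag relations'' and ``$P_{k,n}^m\notin M$'' is false, and no triangular change of generators makes $J\subseteq M$.

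\textbf{This cannot be patched within your setup.} With only the $(\bb{Z}/2)^k$-symmetry, your obstruction vanishes exactly in the cases that matter. On $Fl(1,\dots,1,d+1-k;\mathbb{R}^{d+1})$, pushing forward from the complete flag manifold shows that top-degree evaluation is $f\mapsto\sum_{\sigma\in S_k}[x^{\sigma\rho}]f$, with $\rho=(d,d-1,\dots,d-k+1)$. For symmetric $f$ this equals $k!\,[x^\rho]f\equiv0$ once $k\ge2$. Since $P_{k,n}^m$ is symmetric, your $w_{\mathrm{top}}(E)$ vanishes whenever $\deg P_{k,n}^m$ equals the dimension of the configuration space.

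This already happens for $(k,n)=(4,3)$, $m=1$, $d=5$, even though $x^{(5,4,3,2)}$ occurs in $P_{4,3}$ by \Cref{prp:P}. It happens in every tight case $m=2^j-1$, $d=7\cdot2^{j-1}-2$ behind \Cref{cor:geometric-dyadic}. The smallest instance is two orthogonal lines quartering a planar measure. There $P_{2,2}=x_1x_2(x_1+x_2)\notin(x_1^3,x_2^2)$, but in $H^*(Fl(1,1,1;\mathbb{R}^3))$ one has $x_1x_2(x_1+x_2)=x_1x_2x_3=e_3=0$. The classical proof of that case uses the quarter-turn exchanging the lines.

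Geometrically, take a generic $(\bb{Z}/2)^k\rtimes S_k$-equivariant test section; the natural test map has this symmetry and the action is free. After dividing by $(\bb{Z}/2)^k$, its zeros come in free $S_k$-orbits, so their number is even.

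\textbf{What is missing.} You need to use the permutation symmetry of the hyperplanes as well, working with $(\bb{Z}/2)^k\rtimes S_k$ or counting unordered solutions of a test configuration. That is what makes the single coefficient $[x^\rho]P_{k,n}^m$, rather than its symmetrization, the relevant invariant. A further argument is then needed for the non-tight degrees. Separately, lifting affine hyperplanes to linear ones in $\mathbb{R}^{d+1}$ does not preserve orthogonality: the lifted normals $(v_i,-a_i)$ are not mutually orthogonal in $\mathbb{R}^{d+1}$.
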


We study this criterion for $(k,n)=(4,3)$. For the algebraic
argument, it is convenient to use the symmetric truncation
and define
\begin{equation}\label{eq:algebraic-threshold}
 d^*(m,4,3):=
 \min\left\{
 d:\;
 P_{4,3}^{\,m}\notin
 (x_1^{d+1},x_2^{d+1},x_3^{d+1},x_4^{d+1})
 \right\}.
\end{equation}
Equivalently, $d^*(m,4,3)$ is the least possible largest exponent
of a monomial occurring with nonzero coefficient in
$P_{4,3}^{\,m}$. We show in \Cref{cor:truncation} that, in this
case, the symmetric truncation is equivalent to
\eqref{eq:musin-criterion}. Consequently,
\begin{equation}\label{eq:geometric-algebraic-bounds}
 R(m,4,3)\leq\Delta^*(m,4,3)\leq d^*(m,4,3).
\end{equation}
Since $\alpha_3(4)=14$, the lower bound is
\[
 R(m,4,3)=\left\lceil\frac{7m}{2}+\frac32\right\rceil.
\]

Our main result determines $d^*(m,4,3)$ for every $m$.

\begin{theorem}\label[theorem]{thm:main}
\begin{enumerate}[label=(\roman*)]
 \item For every $j\geq1$,
 \[
  d^*(2^j-1,4,3)
  =R(2^j-1,4,3)
  =7\cdot2^{j-1}-2.
 \]

 \item If
 \[
  m=2^{q+1}(2^j-1)+r,
  \qquad
  j\geq1,\quad q\geq0,\quad 0\leq r\leq2^q-1,
 \]
 then
 \[
  d^*(m,4,3)
  =2^{q+1}\bigl(7\cdot2^{j-1}-2\bigr)+r.
 \]

 \item The equality $d^*(m,4,3)=R(m,4,3)$ holds if and only if
 $m=2^j-1$ for some $j\geq1$.
\end{enumerate}
\end{theorem}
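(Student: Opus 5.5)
The plan is to work over $\bb{F}_2$ and use the Frobenius identity $f^{2^s}(x)=f(x^{2^s})$. For $m=\sum_s \varepsilon_s 2^s$ this gives
\[
P_{4,3}^{\,m}=\prod_s P_{4,3}(x^{2^s})^{\varepsilon_s}.
\]
The first step is a reduction to the case $m=2^j-1$ plus small corrections. Write $m=2^{q+1}M+r$ with $M=2^j-1$ and $r<2^q$, so that $P^m=P^{M}(x^{2^{q+1}})\cdot P^r$. Since the exponents of $P^r$ are at most $14r$, the two factors largely decouple. Every monomial of $P^M(x^{2^{q+1}})$ has exponents divisible by $2^{q+1}$. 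The monomials of $P^r$ have degree $14r$ and each exponent is at most $\deg_{x_i}P\cdot r=7r<7\cdot 2^q$. I will have to check whether this is small enough for the product of a monomial of the first factor with a monomial of the second to determine both uniquely. If not, I will argue through a leading-term or lexicographic choice. The aim is to show that the minimal max-exponent of $P^m$ equals $2^{q+1}d^*(M)+\min\max$ of a suitable correction. The ``$+r$'' in (ii) indicates that the correction is attained by pairing $P^r$ with the extremal monomial $2^{q+1}\mu$, where $\mu$ is a balanced optimal monomial of $P^M$.

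For part (i), the lower bound is counting. $P^M$ is homogeneous of degree $14M$, so some exponent is at least $\lceil 14M/4\rceil=\lceil 7M/2\rceil$. For odd $M=2^j-1$ this equals $7\cdot2^{j-1}-3$. That is still one short of $7\cdot2^{j-1}-2=R$, so I need an extra parity or structural argument. The first thing to try is to show that no monomial with all four exponents in $\{7\cdot 2^{j-1}-4,\;7\cdot2^{j-1}-3\}$ and total $14M$ survives. I would do this by exploiting the symmetry: $P$ is invariant under $S_4$ and under the substitution $x_i\mapsto x_i+x_1+\dots+x_4$, or more precisely under the $GL$-type symmetries preserving the set of linear forms. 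If that fails, I would fall back on the inequality $\Delta^*\ge R$ together with \Cref{cor:truncation}. The lower bound $d^*\ge R$ in fact follows immediately from \eqref{eq:geometric-lower} and \eqref{eq:geometric-algebraic-bounds}, so no separate algebraic argument is needed, and I will use that.

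For the upper bound in (i), I will exhibit an explicit monomial with max exponent $7\cdot2^{j-1}-2$ and prove by induction on $j$ that its coefficient is $1$. The product $P^{2^j-1}=P\cdot P(x^2)\cdots P(x^{2^{j-1}})$ is a product over binary digits. I will choose the monomial in base $2$ digitwise, taking a monomial $\mu_s$ of $P$ at each level. The coefficient is then a sum over digit decompositions with carries, and I will aim to show that the carry structure forces uniqueness modulo $2$, possibly by Lucas-type arguments. The base case $j=1$ requires $d^*(1)=5$, that is, $P_{4,3}$ has a monomial with exponents like $(5,3,3,3)$ or $(5,5,2,2)$. This is verified by direct expansion.

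For (ii), the upper bound takes $2^{q+1}\mu\cdot\nu$ with $\nu$ a monomial of $P^r$ whose exponents add at most $r$ to the heaviest coordinates. The lower bound follows by combining the degree count with the decoupling. Part (iii) is then arithmetic: compare $2^{q+1}(7\cdot2^{j-1}-2)+r$ with $\lceil 7m/2+3/2\rceil$ and show strict inequality whenever $q\ge0$, since the difference is about $2^{q+1}\cdot\tfrac12 - r/2$ or more. The main obstacle is the nonvanishing coefficient computation for (i) and the decoupling lower bound in (ii). These require controlling monomial cancellations in the Frobenius product.
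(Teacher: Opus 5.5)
Your Frobenius splitting $P^m=P^M(x_1^{2^{q+1}},\dots,x_4^{2^{q+1}})\,P^r$ is the right starting point, but there is a genuine gap: the two places you flag as obstacles are where the proof actually lives, and your plan does not supply the ideas that resolve them.

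\textbf{Part (i).} The structural fact you never identify is that every monomial of $P^m$ has pairwise distinct exponents (\Cref{cor:distinct}: $P^m=m_{(4,3,2,1)}F$ with $F$ symmetric, and a transposition involution kills any $x^\theta$ with $\theta_i=\theta_j$).
\begin{itemize}
\item Your base-case candidates $(5,3,3,3)$ and $(5,5,2,2)$ therefore cannot occur. At the bound the only possibility is the staircase $(5,4,3,2)$, and in general $(R_j,R_j-1,R_j-2,R_j-3)$ is forced.
\item The same fact gives $14m\le 4d^*(m)-6$ in one line. Your fallback $d^*\ge\Delta^*\ge R$ is logically sound, but it imports an external geometric theorem and passes through \Cref{cor:truncation}, which rests on this very fact.
\item For the upper bound you name no monomial and no inductive mechanism, and the hoped-for uniqueness is false. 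In $P^{2^{q+1}-1}=P^{2^q-1}(x_1^2,\dots,x_4^2)\,P$ the staircase $\theta_{q+1}$ arises from five distinct pairs $(\sigma\theta_q,\pi)$.
\item What works is the descent recursion of \Cref{lem:descent}. Of the $16$ parity-compatible $\pi\in\supp P$, eleven give shapes with a repeated entry, which vanish by distinctness (\Cref{lem:sym}). The other five give permutations of $\delta=(2,3,4,5)$, so $c_{q+1}(\delta)=5c_q(\delta)=c_q(\delta)$.
\end{itemize}

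\textbf{Part (ii), lower bound.} A degree count cannot give it, since by (iii) the value strictly exceeds $R(m,4,3)$. The missing input is $x_1x_2x_3x_4\mid P$, so every exponent of a monomial of $P^r$ is at least $r$. Together with $\supp P^{2^{q+1}M}=\{2^{q+1}\theta:\theta\in\supp P^M\}$, this gives $d^*(m)\ge 2^{q+1}R_j+r$ (\Cref{lem:drop,cor:double}).

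\textbf{Part (ii), upper bound.} The decoupling you hope for fails in general.
\begin{itemize}
\item Exponents of monomials of $P^r$ lie in $[r,7r]$. Once $6r\ge 2^{q+1}$, the residue of an exponent modulo $2^{q+1}$ no longer determines the $P^r$-factor.
\item For $6r<2^{q+1}$ your idea does work: pair $2^{q+1}(R_j,R_j-1,R_j-2,R_j-3)$ with any $\nu\in\supp P^r$ having $\nu_1=r$; such $\nu$ exist.
\item The paper avoids treating each $r$ separately by reducing, via \Cref{cor:interval}, to the single endpoint $r=2^q-1$. This is precisely the worst case for decoupling.
\item There it takes $T=2^{q+1}\theta^0+\nu_q$. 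The range bounds still leave two decompositions, so one must show both that $\nu_q$ occurs and that the competing factor, a permutation of $C_q-(1,6,3,4)$, does not.
\item That requires a second descent in the frame $C_q=2^q(1,3,5,5)$, restricted to shapes with first entry $1$ (\Cref{lem:x1,lem:bottom,cor:pair,cor:two}).
\end{itemize}
Nothing in your plan plays this role; specifying the correction monomial and controlling its cancellations is the missing idea.

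Part (iii) is indeed just arithmetic once (i) and (ii) are established.
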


Every positive integer $m$ either equals $2^j-1$ or has a unique
representation as in part~(ii). In the latter case, $q$ is the
position of the highest zero digit in the binary expansion of
$m$, with the least significant position numbered zero; $j$ is
the number of leading ones, and $r$ is formed by the digits
below that zero. Thus \Cref{thm:main} gives the exact threshold
of Musin's criterion for every $m$.

Combining \Cref{thm:main} with
\eqref{eq:geometric-algebraic-bounds} yields the following
geometric consequence.

\begin{corollary}\label[corollary]{cor:geometric-dyadic}
For every $j\geq1$,
\[
 \Delta^*(2^j-1,4,3)=7\cdot2^{j-1}-2.
\]
\end{corollary}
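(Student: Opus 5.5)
The corollary is a sandwich argument. We combine the two-sided estimate \eqref{eq:geometric-algebraic-bounds},
\[
 R(m,4,3)\leq\Delta^*(m,4,3)\leq d^*(m,4,3),
\]
with part~(i) of \Cref{thm:main} at $m=2^j-1$. There the outer quantities coincide, so the middle one is squeezed.

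\textbf{Lower bound.} Musin's geometric estimate \eqref{eq:geometric-lower} with $k=4$, $n=3$ gives $\alpha_3(4)=4+6+4=14$. Hence
\[
 R(m,4,3)=\lceil 7m/2+3/2\rceil .
\]
For $m=2^j-1$,
\[
 \frac{7m}{2}+\frac32=7\cdot2^{j-1}-\frac72+\frac32=7\cdot 2^{j-1}-2 .
\]
For $j\geq1$ this is already an integer, so the ceiling changes nothing. Therefore $\Delta^*(2^j-1,4,3)\geq 7\cdot2^{j-1}-2$.

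\textbf{Upper bound.} \Cref{thm:main}(i) gives $d^*(2^j-1,4,3)=7\cdot2^{j-1}-2$. By definition \eqref{eq:algebraic-threshold}, this means $P_{4,3}^{\,m}\notin(x_1^{d+1},\dots,x_4^{d+1})$ for $d=7\cdot2^{j-1}-2$.

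To apply \Cref{thm:musin} we need the asymmetric non-membership \eqref{eq:musin-criterion}, which is the content of \Cref{cor:truncation}. It is also worth seeing why this passage is needed and where it goes through directly. The asymmetric ideal $(x_1^{d+1},x_2^d,x_3^{d-1},x_4^{d-2})$ is larger than the symmetric one, so the implication is not automatic. It does follow from the symmetry of $P_{4,3}$ under permuting the variables, combined with a degree count.

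The polynomial $P_{4,3}^{\,m}$ is homogeneous of degree $14m$. A surviving monomial has all exponents at most $d$. Using the $S_4$-symmetry, we may permute its exponent vector into decreasing order $e_1\geq e_2\geq e_3\geq e_4$; the permuted monomial has the same coefficient and so still survives.

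For this monomial to avoid the asymmetric ideal we need $e_2\leq d-1$, $e_3\leq d-2$ and $e_4\leq d-3$. For $m=2^j-1$ we have
\[
 14m=7\cdot2^{j+1}-14=4d-6 ,
\]
so the exponents are tightly packed near $d$. Any decreasing vector with $e_i\leq d$ and sum $4d-6$ has to be compared with these requirements, and the one real check is that some surviving monomial has a suitable exponent profile. That is exactly what \Cref{cor:truncation} asserts, and I would simply cite it.

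With \eqref{eq:musin-criterion} established, \Cref{thm:musin} gives $\Delta^*(2^j-1,4,3)\leq d$. This bound is stated for $d\geq k=4$, which holds since $d\geq 5$ when $j\geq1$.

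Combining the two bounds yields equality. The only nontrivial ingredient is \Cref{thm:main}(i) itself, namely the existence of a nonzero monomial with maximal exponent $d$. Everything else is arithmetic and the cited truncation equivalence.
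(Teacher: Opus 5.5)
Your proposal is correct and is the paper's own sandwich argument. The lower bound $R(2^j-1,4,3)=7\cdot2^{j-1}-2$ comes from \eqref{eq:geometric-lower}, and the upper bound from \Cref{thm:main}(i) via \Cref{cor:truncation} and \Cref{thm:musin}, which together are exactly \eqref{eq:geometric-algebraic-bounds}.

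One correction to your aside: the passage from the symmetric to the asymmetric truncation rests on symmetry plus the pairwise-distinct exponents of \Cref{cor:distinct}, not on a degree count. A count alone would not exclude a profile like $(d,d,d-3,d-3)$, which has sum $4d-6$ but has no permutation outside the asymmetric ideal. Since you cite \Cref{cor:truncation}, this does not affect your argument.
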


The proof of the main theorem uses symmetry to represent monomials by integer
partitions. We first show that every monomial of
$P_{4,3}^{\,m}$ has pairwise distinct exponents; at the lower
bound, this forces a staircase partition when $m=2^j-1$. Then, using the recurrence relation $P_{4, 3}^{2^{j + 1} - 1} = (P_{4, 3}^{2^j - 1})^2P_{4, 3}$, we prove by induction that the monomial with staircase exponents occurs in $P_{4, 3}^{2^j - 1}$ for any $j \geq 1$. Later we reduce the general case to $m = 2^{q + 1}(2^j - 1) + 2^q - 1$ with $q \geq 1$, $j \geq 1$; we then construct a monomial type at which the minimum is attained and prove that it occurs in $P_{4, 3}^m$ for any $m$ of the type described above.

\section{Algebraic preliminaries}

We use standard notation for partitions and symmetric
polynomials; see \cite[Chapter~I]{Macdonald1995}.

For $\alpha=(\alpha_1,\dots,\alpha_k)\in\bb{Z}_{\geq0}^k$, write
\[
 x^\alpha:=x_1^{\alpha_1}\cdots x_k^{\alpha_k},
 \qquad
 |\alpha|:=\alpha_1+\cdots+\alpha_k.
\]
For $F\in\bb{F}_2[x_1,\dots,x_k]$, let $[x^\alpha]F$ denote
the coefficient of $x^\alpha$, and put
\[
 \supp F:=\{\alpha\in\bb{Z}_{\geq0}^k:[x^\alpha]F=1\}.
\]

A \emph{partition (strict partition) with at most $k$ parts} is a vector
$\lambda\in\bb{Z}_{\geq0}^k$ with
$\lambda_1\geq\cdots\geq\lambda_k$ ($\lambda_1 > \cdots >\lambda_k$); trailing zeros are omitted
when convenient. Its size is $|\lambda|$. Define the
\emph{monomial symmetric polynomial}
\[
 m_\lambda:=\sum_{\alpha\in S_k\lambda}x^\alpha,
\]
where $S_k\lambda$ is the set of distinct permutations of
$\lambda$. The polynomials $m_\lambda$ form a basis of the
symmetric polynomials over $\bb{F}_2$. Thus every symmetric
polynomial has a unique expansion
\[
 F=\sum_\lambda a_\lambda m_\lambda,
 \qquad a_\lambda\in\bb{F}_2.
\]
Hence the $S_k$-orbits of monomials of $F$ correspond to the partitions
$\lambda$ with $a_\lambda=1$.
\begin{lemma}\label[lemma]{lem:vdm}
For $k\geq2$,
\[
 Q_{k,1}Q_{k,2}=m_{(k,k-1,\dots,1)}.
\]
\end{lemma}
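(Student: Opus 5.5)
The plan is to recognize $Q_{k,1}Q_{k,2}$ over $\bb{F}_2$ as a Vandermonde-type product and expand it as a determinant. Over $\bb{F}_2$ we have $x_i+x_j=x_i-x_j$. Hence
\[
 Q_{k,2}=\prod_{i<j}(x_i+x_j)=\prod_{i<j}(x_i-x_j)=\det\bigl(x_i^{k-j}\bigr)_{i,j=1}^k ,
\]
the last equality holding up to sign, and sign is irrelevant in characteristic $2$. Also $Q_{k,1}=x_1x_2\cdots x_k$. Multiplying row $i$ of the Vandermonde matrix by $x_i$ therefore gives
\[
 Q_{k,1}Q_{k,2}=\det\bigl(x_i^{k-j+1}\bigr)_{i,j=1}^k
 =\sum_{\sigma\in S_k}\operatorname{sgn}(\sigma)\prod_{i=1}^k x_i^{\,k-\sigma(i)+1}.
\]

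Next I expand the determinant by the Leibniz formula and read off the monomials. Each $\sigma$ contributes the monomial $x^\alpha$ with $\alpha_i=k+1-\sigma(i)$. As $\sigma$ ranges over $S_k$, the vector $\alpha$ ranges over all permutations of $(k,k-1,\dots,1)$. Distinct $\sigma$ give distinct exponent vectors, because the entries $k,\dots,1$ are pairwise distinct. So no two terms cancel, and modulo $2$ every sign equals $1$. The sum is therefore exactly $\sum_{\alpha\in S_k(k,\dots,1)}x^\alpha=m_{(k,k-1,\dots,1)}$.

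The only step needing care is the identification of $\prod_{i<j}(x_i+x_j)$ with the Vandermonde determinant over $\bb{F}_2$. This rests on the classical integer identity $\prod_{i<j}(x_i-x_j)=\det(x_i^{k-j})$, reduced modulo $2$. If one wants to avoid determinants, there is an alternative argument. The product $Q_{k,1}Q_{k,2}$ is symmetric, so it is a sum of $m_\lambda$ with $|\lambda|=k+\binom{k}{2}=\binom{k+1}{2}$. It vanishes whenever $x_i=x_j$, so every monomial orbit in its support has distinct exponents. Since every exponent is at least $1$, the only strict partition of $\binom{k+1}{2}$ into $k$ positive parts is $(k,\dots,1)$. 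The coefficient of $x_1^k x_2^{k-1}\cdots x_k$ is $1$: its only contribution comes from taking $x_i$ from each factor $x_i+x_j$ with $i<j$, times $x_1\cdots x_k$. The first route is shorter, so I would present the determinant argument.
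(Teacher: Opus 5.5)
Your proof is correct and takes the same approach as the paper, which also applies the Vandermonde identity over $\bb{F}_2$ to get $Q_{k,2}=m_{(k-1,k-2,\dots,0)}$ and then multiplies by $Q_{k,1}=x_1\cdots x_k$. The only difference is that you multiply the rows by $x_i$ before the Leibniz expansion, and your check that distinct $\sigma$ give distinct exponent vectors (so nothing cancels mod $2$) spells out a step the paper leaves implicit.
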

\begin{proof}
Over $\bb{F}_2$, the Vandermonde identity gives
$Q_{k,2}=m_{(k-1,k-2,\dots,0)}$.
Multiplication by $Q_{k,1}=x_1\cdots x_k$ yields the claim.
\end{proof}
\begin{lemma}\label[lemma]{lem:distinct}
Let $\lambda$ be a strict partition and $\pi$ a partition. Then every monomial of $m_\lambda m_\pi$ has pairwise distinct exponents; that is, $m_\lambda m_\pi$ is a sum of $m_\theta$ with $\theta$ a strict partition.
\end{lemma}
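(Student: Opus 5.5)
We work over $\bb{F}_2$ and use the fact that a strict partition $\lambda$ with at most $k$ parts gives an alternating polynomial. The idea is to write $m_\lambda$ as a ratio of alternants. Put $\delta=(k-1,k-2,\dots,0)$ and $a_\mu:=\det(x_i^{\mu_j})$. In characteristic $2$ the determinant equals the permanent. Hence, for a strict $\mu$ (distinct exponents), $a_\mu=\sum_{\sigma\in S_k}x^{\sigma\mu}=m_\mu$. In particular $a_\delta=m_\delta=Q_{k,2}$, which is the Vandermonde identity already used in \Cref{lem:vdm}.

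Given a strict $\lambda$, set $\nu:=\lambda-\delta$. This $\nu$ is a partition, since $\lambda_i-\lambda_{i+1}\geq1$. The classical bialternant formula $a_{\nu+\delta}=s_\nu\,a_\delta$ holds over $\bb{Z}$ and therefore also after reduction mod $2$. It gives
\[
 m_\lambda=a_\lambda=s_\nu\,a_\delta=s_\nu\,Q_{k,2}.
\]
It follows that
\[
 m_\lambda m_\pi=(s_\nu m_\pi)\,Q_{k,2}=G\cdot a_\delta,
\qquad G:=s_\nu m_\pi,
\]
where $G$ is a symmetric polynomial.

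Next we expand $G$ in the basis $m_\rho$ and use the standard identity $m_\rho\,a_\delta=\sum_{\alpha\in S_k\rho}x^\alpha a_\delta$. Each term $x^\alpha a_\delta=\sum_\sigma x^{\alpha+\sigma\delta}$ symmetrizes to
\[
 m_\rho a_\delta=\sum_{\alpha\in S_k\rho}a_{\alpha+\delta}.
\]
Every alternant $a_\beta$ is either $0$, when $\beta$ has a repeated entry, or equals $\pm a_\theta=a_\theta$ mod $2$ with $\theta$ the sorted, strict rearrangement of $\beta$. In both cases it is a sum of monomials with pairwise distinct exponents. Since $a_\theta=m_\theta$ for strict $\theta$, it follows that $G\,a_\delta$ is an $\bb{F}_2$-combination of $m_\theta$ with $\theta$ strict.

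The one step needing care is the characteristic-$2$ bookkeeping. The bialternant identity must be used as an integer identity reduced mod $2$; this is legitimate because $s_\nu$ has integer coefficients. We must also confirm that $a_\beta$ vanishes mod $2$ when $\beta$ has a repeat. That holds because the determinant then has two equal columns, so it is identically zero as a polynomial.

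A more elementary alternative avoids Schur functions altogether. Show directly that $m_\lambda m_\pi$ is divisible by $\prod_{i<j}(x_i+x_j)$. It is symmetric, which in characteristic $2$ means alternating, so it vanishes on $x_i=x_j$. The quotient is then a symmetric $G$, and the same expansion finishes the argument.
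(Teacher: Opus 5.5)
Your main argument is correct, but it takes a different route from the paper.

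\textbf{How the two proofs differ.} The paper works directly with monomials. If $\theta_i=\theta_j$, the transposition $\tau=(ij)$ acts on the pairs $(\alpha,\beta)\in S_k\lambda\times S_k\pi$ with $\alpha+\beta=\theta$ as an involution without fixed points. A fixed point would force $\alpha_i=\alpha_j$, which is impossible since $\lambda$ is strict. Hence $[x^\theta](m_\lambda m_\pi)=0$.

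You instead pass to alternants. You identify $m_\mu=a_\mu$ over $\mathbb{F}_2$ for strict $\mu$, factor $m_\lambda=s_\nu a_\delta$ by the bialternant formula, and expand $G a_\delta$ into alternants $a_{\alpha+\delta}$. Each of these is $0$ or some $m_\theta$ with $\theta$ strict. Every step is sound; the identity $m_\rho a_\delta=\sum_{\alpha\in S_k\rho}a_{\alpha+\delta}$ is just a reindexing of the orbit sum.

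\textbf{The Schur detour is unnecessary.} Since $m_\lambda=a_\lambda$ already, the same reindexing with $a_\lambda$ in place of $a_\delta$ gives directly
\[
 m_\lambda m_\pi=a_\lambda m_\pi=\sum_{\alpha\in S_k\pi}a_{\lambda+\alpha}\qquad\text{over }\mathbb{F}_2 .
\]
Moreover, the vanishing of $a_\beta$ when $\beta$ has a repeated entry is, monomial by monomial, the same involution the paper uses. So the paper's proof is shorter and self-contained. Your route, in this streamlined form, yields an explicit expansion rule: add each rearrangement of $\pi$ to $\lambda$, discard vectors with a repeated entry, and sort the rest. That is exactly the computation carried out in the proof of \Cref{prp:P}, where the paper uses it without comment.

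\textbf{The closing alternative is wrong as justified.} A symmetric polynomial over $\mathbb{F}_2$ need not vanish on $x_i=x_j$. For instance, with $k=2$, $x_1x_2$ is symmetric but not divisible by $x_1+x_2$. The implication ``alternating $\Rightarrow$ vanishes on the diagonal'' rests on $f|_{x_i=x_j}=-f|_{x_i=x_j}\Rightarrow f|_{x_i=x_j}=0$, which fails in characteristic $2$.

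Divisibility of $m_\lambda m_\pi$ by $\prod_{i<j}(x_i+x_j)$ does hold here, but only because $m_\lambda$ itself vanishes on $x_i=x_j$ when $\lambda$ is strict. Its monomials pair up as $x^\alpha$ and $x^{\tau\alpha}$ with $\alpha\neq\tau\alpha$, and the two members of each pair coincide after setting $x_i=x_j$. That is again the paper's involution. So strictness of $\lambda$ must enter that step explicitly; symmetry of the product alone does not suffice.
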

\begin{proof}
For a partition $\gamma \in \bb{Z}_{\geq0}^k$, $m_\gamma = \sum\limits_{\alpha \in S_k\gamma}x^\alpha$ as defined above, hence
\[
 m_\lambda m_\pi = \sum_{\alpha \in S_k\lambda}\sum_{\beta \in S_k\pi}x^{\alpha + \beta},
\]
and over $\bb{F}_2$ the coefficient of $x^\theta$ is the parity of the number of elements of
\[
 N(\theta) := \bigl\{(\alpha, \beta) \in S_k\lambda \times S_k\pi : \alpha + \beta = \theta\bigr\}.
\]
Let $\theta_i = \theta_j$ for some $i \neq j$, let $\tau \in S_k$ be the transposition of $i$ and $j$, and consider the map $(\alpha, \beta) \mapsto (\tau\alpha, \tau\beta)$.
\begin{enumerate}[label=(\roman*)]
 \item It maps $N(\theta)$ into itself: $\tau\alpha \in S_k\lambda$, $\tau\beta \in S_k\pi$, and $\tau\alpha + \tau\beta = \tau\theta = \theta$ because $\theta_i = \theta_j$.
 \item Applied twice, it is the identity.
 \item It has no fixed points: $\tau\alpha = \alpha$ would mean $\alpha_i = \alpha_j$, while the entries of $\alpha$, a rearrangement of the strict partition $\lambda$, are pairwise distinct.
\end{enumerate}
So $N(\theta)$ splits into pairs $\{(\alpha, \beta), (\tau\alpha, \tau\beta)\}$, $|N(\theta)|$ is even, and $[x^\theta](m_\lambda m_\pi) = 0$. Thus every monomial $x^\theta$ of $m_\lambda m_\pi$ has pairwise distinct exponents. As $m_\lambda m_\pi$ is symmetric, it is the sum of the $m_\theta$ over the partitions $\theta$ with $x^\theta$ a monomial of $m_\lambda m_\pi$, and these $\theta$ are strict partitions.
\end{proof}
\begin{corollary}\label[corollary]{cor:distinct}
Let $k \geq n \geq 2$ and $m \geq 1$. Then every monomial of $P_{k, n}^m$ has pairwise distinct exponents; that is, $P_{k, n}^m$ is a sum of $m_\theta$ with $\theta$ a strict partition.
\end{corollary}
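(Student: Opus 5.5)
Since $n\geq2$, the product $P_{k,n}$ contains the factor $Q_{k,1}Q_{k,2}$. The plan is to write
\[
 P_{k,n}^m=\bigl(Q_{k,1}Q_{k,2}\bigr)\cdot R,
 \qquad
 R:=\bigl(Q_{k,1}Q_{k,2}\bigr)^{m-1}\prod_{j=3}^{n}Q_{k,j}^{\,m},
\]
and then apply \Cref{lem:vdm} to the first factor and \Cref{lem:distinct} to the product.

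First, by \Cref{lem:vdm}, $Q_{k,1}Q_{k,2}=m_{(k,k-1,\dots,1)}$, and $(k,k-1,\dots,1)$ is a strict partition. Second, $R$ is symmetric. Each $Q_{k,j}$ is a product over all $j$-subsets of $\{1,\dots,k\}$, and a permutation of the variables only permutes these subsets, so each $Q_{k,j}$ is symmetric. Hence $R$ is a product of symmetric polynomials and is symmetric. It therefore has a unique expansion $R=\sum_\pi a_\pi m_\pi$ over partitions $\pi$ with at most $k$ parts. When $m=1$ and $n=2$ we have $R=1=m_{(0,\dots,0)}$, which fits this form.

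Distributing the product gives
\[
 P_{k,n}^m=\sum_\pi a_\pi\, m_{(k,\dots,1)}\,m_\pi .
\]
By \Cref{lem:distinct}, each summand $m_{(k,\dots,1)}m_\pi$ is a sum of $m_\theta$ with $\theta$ strict. A sum over $\bb{F}_2$ of such polynomials is again a sum of $m_\theta$ with $\theta$ strict, because cancellation can only remove terms and never creates $m_\theta$ with repeated parts. Equivalently, every monomial of $P_{k,n}^m$ is a monomial of some $m_{(k,\dots,1)}m_\pi$, and so has pairwise distinct exponents.

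No step here is a real obstacle. The only points that need care are the symmetry of $R$, which is what lets us expand it in the $m_\pi$ basis, and the observation that the property of having pairwise distinct exponents passes to sums of polynomials.
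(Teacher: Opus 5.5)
Your proof is correct and follows essentially the same route as the paper. You factor off $Q_{k,1}Q_{k,2}=m_{(k,\dots,1)}$ via \Cref{lem:vdm}, expand the symmetric cofactor in the $m_\pi$ basis, apply \Cref{lem:distinct} termwise, and note that distinctness of exponents survives summation over $\bb{F}_2$.
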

\begin{proof}
Each $Q_{k, j}$ is symmetric, since a permutation of the variables permutes its factors. Write
\[
 P_{k, n}^m = Q_{k, 1}Q_{k, 2}F, \qquad F := Q_{k, 1}^{m - 1}Q_{k, 2}^{m - 1}Q_{k, 3}^m\cdots Q_{k, n}^m,
\]
where $F$ is symmetric, i.e., a sum of some $m_\pi$, and $Q_{k, 1}Q_{k, 2} = m_{(k, \dots, 2, 1)}$ with $(k, \dots, 2, 1)$ a strict partition (\Cref{lem:vdm}). By \Cref{lem:distinct}, every product $m_{(k, \dots, 2, 1)}m_\pi$ has only monomials with pairwise distinct exponents, and hence so does their sum $P_{k, n}^m$.
\end{proof}
\begin{definition}
Throughout the rest of the paper, put $P:=P_{4,3}$.
\end{definition}
\begin{proposition}\label[proposition]{prp:P}
We have
\[
 P=m_{(7,4,2,1)}+m_{(6,5,2,1)}
   +m_{(6,4,3,1)}+m_{(5,4,3,2)}.
\]
Consequently, every
$\pi\in\supp P$ has pairwise distinct entries in $[1,7]$.
\end{proposition}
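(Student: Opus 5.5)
The plan is to compute $P=Q_{4,1}Q_{4,2}Q_{4,3}$ explicitly in the monomial symmetric basis, writing each factor in that basis and then reading off coefficients.

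First, \Cref{lem:vdm} gives $Q_{4,1}Q_{4,2}=m_{(4,3,2,1)}$. For the remaining factor, note that $x_1+x_2+x_3=e_1+x_4$ over $\bb{F}_2$, and similarly for the other triples, so
\[
 Q_{4,3}=\prod_{i=1}^4(e_1+x_i).
\]
Expanding $\prod_i(t+x_i)=t^4+e_1t^3+e_2t^2+e_3t+e_4$ at $t=e_1$ and cancelling $e_1^4+e_1^4=0$ gives
\[
 Q_{4,3}=e_1^2e_2+e_1e_3+e_4 .
\]
Over $\bb{F}_2$ we have $e_1^2=m_{(2)}$, so $e_1^2e_2=m_{(3,1)}+m_{(2,1,1)}$. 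Also $e_1e_3=m_{(2,1,1)}+4m_{(1,1,1,1)}=m_{(2,1,1)}$. Hence
\[
 Q_{4,3}=m_{(3,1)}+m_{(1,1,1,1)},
 \qquad
 P=m_{(4,3,2,1)}m_{(3,1)}+m_{(5,4,3,2)} ,
\]
using $m_{(4,3,2,1)}m_{(1,1,1,1)}=m_{(5,4,3,2)}$.

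Next I restrict the possible partitions. By \Cref{cor:distinct} (or \Cref{lem:distinct} applied directly), $P$ is a sum of $m_\theta$ with $\theta$ strict, and $|\theta|=\deg P=14$. Each variable occurs in one factor of $Q_{4,1}$, three factors of $Q_{4,2}$ and three factors of $Q_{4,3}$, so every exponent is at most $7$. There are four parts, and each is at least $1$ because $x_1x_2x_3x_4$ divides $P$. The strict partitions of $14$ into four positive parts are $(8,3,2,1)$, $(7,4,2,1)$, $(6,5,2,1)$, $(6,4,3,1)$ and $(5,4,3,2)$. The bound $\le7$ excludes the first. It therefore remains to find, for the four surviving $\theta$, the parity of
\[
 \#\{(\alpha,\beta)\in S_4(4,3,2,1)\times S_4(3,1,0,0):\alpha+\beta=\theta\},
\]
and then to add $1$ for $\theta=(5,4,3,2)$.

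This finite count is the main obstacle, though it is routine. I would organise it by $\beta$: choose the position of the $3$ and the position of the $1$ in $\theta$. Then $\alpha=\theta-\beta$ must be a permutation of $(4,3,2,1)$.
\begin{itemize}
 \item For $\theta=(7,4,2,1)$, the only pair is $\beta=(3,1,0,0)$.
 \item For $\theta=(6,5,2,1)$, the candidates are $\beta=(3,1,0,0)$, which gives $\alpha=(3,4,2,1)$ and works, and $\beta=(1,3,0,0)$, which gives $(5,\dots)$ and fails. So there is one pair.
 \item For $\theta=(6,4,3,1)$, I expect exactly one pair, $\alpha=(3,4,2,1)$ with $\beta=(3,0,1,0)$.
 \item For $\theta=(5,4,3,2)$, the pairs should cancel in pairs, so that the total coefficient, including the extra $m_{(5,4,3,2)}$, is $1$. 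The case check is either $0$ pairs, or an even number of pairs with the correction coming from elsewhere, and I will recheck it carefully.
\end{itemize}
If all four coefficients are odd, the displayed formula follows. The final sentence of the statement is then immediate from the listed partitions: their entries are distinct and lie in $[1,7]$, and every $\pi\in\supp P$ is a permutation of one of them.
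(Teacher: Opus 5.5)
Your route is the paper's. You use the same derivation of $Q_{4,3}=e_1^2e_2+e_1e_3+e_4=m_{(3,1)}+m_{(1,1,1,1)}$ and the same reduction to $P=m_{(4,3,2,1)}m_{(3,1)}+m_{(5,4,3,2)}$, and everything you state up to that point is correct, including $e_1e_3=m_{(2,1,1)}+4m_{(1,1,1,1)}$. The only difference is bookkeeping. The paper fixes $\alpha=(4,3,2,1)$, adds the twelve permutations of $(3,1,0,0)$ and sorts. This yields the candidate partitions and their multiplicities in one pass, and it is valid because $(4,3,2,1)$ and every strict $\theta$ have orbits of the same size $24$. You instead first list the five strict partitions of $14$ into four positive parts and discard $(8,3,2,1)$ using the exponent bound $1+3+3=7$. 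That preliminary step is correct.

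The gap is that you stop exactly where the content of the proposition lies. For $(6,4,3,1)$ you only \emph{expect} uniqueness, and for $(5,4,3,2)$ you do not count at all. Your remark about ``$0$ pairs, or an even number'' only restates that this count must be even; the $+1$ from $m_{(4,3,2,1)}m_{(1,1,1,1)}$ has already been separated off. As written, the displayed formula is therefore not proved.

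It closes in a few lines. Since $\alpha_1\le 4$ and $\beta_1\in\{0,1,3\}$, you need $\beta_1\ge\theta_1-4$.
\begin{itemize}
 \item For $\theta_1\in\{6,7\}$ this forces $\beta_1=3$, leaving three positions for the $1$. For $(6,4,3,1)$ they give $\alpha=(3,3,3,1)$, $(3,4,2,1)$, $(3,4,3,0)$, so there is exactly one pair. The same pruning justifies your treatment of $(7,4,2,1)$ and $(6,5,2,1)$; there $\beta=(1,3,0,0)$ was never a real candidate.
 \item For $(5,4,3,2)$ you need $\beta_1\in\{1,3\}$. Then $\beta_1=1$ admits only $\beta=(1,3,0,0)$ with $\alpha=(4,1,3,2)$, and $\beta_1=3$ admits only $\beta=(3,0,0,1)$ with $\alpha=(2,4,3,1)$.
\end{itemize}
So for $(5,4,3,2)$ there are exactly two pairs, its coefficient in $m_{(4,3,2,1)}m_{(3,1)}$ is $0$, and its total coefficient is $1$. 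This agrees with the paper's finding that $(5,4,3,2)$ appears twice.
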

\begin{proof}
Let $e_i$ denote the elementary symmetric polynomials in four
variables and $s:=e_1$. Then
\[
 Q_{4,3}
 =\prod_{\ell=1}^4(s+x_\ell)
 =s^4+e_1s^3+e_2s^2+e_3s+e_4
 =e_1^2e_2+e_1e_3+e_4.
\]
Since
\[
 e_1^2e_2=m_{(3,1)}+m_{(2,1,1)},
 \qquad
 e_1e_3=m_{(2,1,1)},
\]
we obtain $Q_{4,3}=m_{(3,1)}+m_{(1,1,1,1)}$. Hence (\Cref{lem:vdm})
\[
 P=m_{(4,3,2,1)}
   \bigl(m_{(3,1)}+m_{(1,1,1,1)}\bigr).
\]

To expand $m_{(4,3,2,1)}m_{(3,1)}$, add each of the twelve
permutations of $(3,1,0,0)$ to $(4,3,2,1)$.
After sorting, the vectors with distinct entries give
$(7,4,2,1)$, $(6,5,2,1)$, and $(6,4,3,1)$ once each,
and $(5,4,3,2)$ twice (vectors with repeated entries can be ignored by \Cref{lem:distinct}). Thus
\[
 m_{(4,3,2,1)}m_{(3,1)}
 =m_{(7,4,2,1)}+m_{(6,5,2,1)}+m_{(6,4,3,1)}.
\]
Finally,
$m_{(4,3,2,1)}m_{(1,1,1,1)}=m_{(5,4,3,2)}$.
\end{proof}

In coefficient expressions, we set $[x^\alpha]F=0$
if any entry of $\alpha$ is negative.
\begin{corollary}\label[corollary]{cor:truncation}
For $m\geq1$ and $d\geq4$,
\[
 P^m\notin(x_1^{d+1},x_2^{d+1},x_3^{d+1},x_4^{d+1})
 \quad\Longleftrightarrow\quad
 P^m\notin(x_1^{d+1},x_2^d,x_3^{d-1},x_4^{d-2}).
\]
\end{corollary}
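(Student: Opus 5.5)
One direction is immediate. The ideal $(x_1^{d+1},x_2^{d},x_3^{d-1},x_4^{d-2})$ contains $(x_1^{d+1},x_2^{d+1},x_3^{d+1},x_4^{d+1})$. Hence if $P^m$ lies in the symmetric ideal, it also lies in the staircase ideal. Taking contrapositives, if $P^m\notin(x_1^{d+1},x_2^d,x_3^{d-1},x_4^{d-2})$, then $P^m\notin(x_1^{d+1},\dots,x_4^{d+1})$.

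For the converse we use that both ideals are monomial ideals. So $P^m$ fails to lie in one of them exactly when some $\alpha\in\supp P^m$ avoids all its generators.

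Suppose $P^m\notin(x_1^{d+1},\dots,x_4^{d+1})$. Then there is $\alpha\in\supp P^m$ with $\alpha_i\leq d$ for all $i$.

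By \Cref{cor:distinct}, $P^m$ is a sum of $m_\theta$ with $\theta$ strict. So the entries of $\alpha$ are pairwise distinct, and $\alpha$ is a permutation of a strict partition $\theta=(\theta_1>\theta_2>\theta_3>\theta_4)$ with $\theta_1\leq d$.

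Strictness then forces
\[
\theta_2\leq\theta_1-1\leq d-1,\qquad \theta_3\leq d-2,\qquad \theta_4\leq d-3.
\]

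Since $P^m$ is symmetric, $x^\theta$ itself lies in $\supp P^m$. Its exponents satisfy $\theta_1\leq d$, $\theta_2\leq d-1$, $\theta_3\leq d-2$ and $\theta_4\leq d-3$. These are exactly the conditions $\theta_1<d+1$, $\theta_2<d$, $\theta_3<d-1$, $\theta_4<d-2$. Thus $x^\theta$ is divisible by none of the generators $x_1^{d+1},x_2^d,x_3^{d-1},x_4^{d-2}$, and since the ideal is monomial, $P^m\notin(x_1^{d+1},x_2^d,x_3^{d-1},x_4^{d-2})$.

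The hypothesis $d\geq4$ only ensures that all exponents $d-k+2$ in Musin's criterion are positive, so the ideal is the one from \Cref{thm:musin}.

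There is no real obstacle. The key input is the strictness from \Cref{cor:distinct}, together with sorting the exponents in decreasing order via symmetry.
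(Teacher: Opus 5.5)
Your proof is correct and follows essentially the same route as the paper. The implication from the staircase truncation to the symmetric one comes from ideal containment. For the other implication, you use \Cref{cor:distinct} together with symmetry to sort a surviving exponent vector into a strict partition with $\theta_i\leq d-i+1$, and that monomial then escapes the staircase ideal.
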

\begin{proof}
For the forward implication, symmetry and
\Cref{cor:distinct} allow us to order a surviving exponent
vector as $\theta_1>\cdots>\theta_4$. Then
$\theta_i\leq d-i+1$, so the corresponding monomial survives
the second truncation. The reverse implication is immediate.
\end{proof}
\begin{definition}
Write $d^*(m):=d^*(m,4,3)$ and
$R_j:=R(2^j-1,4,3)=7\cdot2^{j-1}-2$.
\end{definition}
\begin{proposition}\label[proposition]{thm:lower}
\begin{enumerate}[label=(\roman*)]
 \item $d^*(m)\geq R(m,4,3)$ for every $m\geq1$.
 \item Let $\lambda$ and $\pi = (t, t - 1, t - 2, t - 3)$ be strict partitions with $|\lambda| = |\pi|$ and $\lambda \neq \pi$. Then $\lambda_1 > t$.
\end{enumerate}
\end{proposition}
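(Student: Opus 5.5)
For part (i), I will use only the structure already established: \Cref{cor:distinct} and \Cref{prp:P}. Every monomial of $P^m$ is homogeneous of degree $14m$, since $\deg P=\alpha_3(4)=14$. Take a monomial $x^\theta$ of $P^m$. After applying a symmetry, I may assume $\theta_1>\theta_2>\theta_3>\theta_4\geq0$, since the entries are pairwise distinct by \Cref{cor:distinct}.

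Let $M=\theta_1$ be the largest exponent. Then $\theta_i\leq M-i+1$, so
\[
 14m=|\theta|\leq 4M-6,
\]
and hence $M\geq \frac{7m}{2}+\frac32$. Since $M$ is an integer, $M\geq\lceil 7m/2+3/2\rceil=R(m,4,3)$. As $d^*(m)$ is the least possible largest exponent of a monomial of $P^m$, this gives $d^*(m)\geq R(m,4,3)$.

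For part (ii), the argument is purely combinatorial. Suppose $\lambda_1\leq t$. Because $\lambda$ is strict, $\lambda_i\leq\lambda_1-(i-1)\leq t-i+1=\pi_i$ for every $i$. Summing over $i$ gives $|\lambda|\leq|\pi|$, with equality only if $\lambda_i=\pi_i$ for all $i$. Since $|\lambda|=|\pi|$ is assumed, we would get $\lambda=\pi$, which is a contradiction. Hence $\lambda_1>t$.

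I do not expect any real obstacle here. The only point needing care is that in part (i) the strictness of the exponents must come from \Cref{cor:distinct}, which applies since $n=3\geq2$, and that the exponent $\theta_4$ is allowed to be $0$, which does not affect the bound.
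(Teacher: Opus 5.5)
Your proposal is correct and follows the paper's proof. Part (i) is the same staircase bound $14m\le 4d^*(m)-6$ obtained via \Cref{cor:distinct}. In part (ii), your termwise comparison $\lambda_i\le\pi_i$, followed by the equality case of the summed inequality, is a slightly streamlined version of the paper's argument, which instead locates a gap $\lambda_{s+1}<\lambda_s-1$ to get a strict inequality.
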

\begin{proof}
(i) Take $\theta \in \supp P^m$ with $\max_i\theta_i = d^*(m)$. By \Cref{cor:distinct}, after reordering $\theta$ is a strict partition, so $\theta_i \leq \theta_1 - (i - 1)$ and
\[
 14m = |\theta| \leq \sum_{i = 1}^4\bigl(\theta_1 - (i - 1)\bigr) = 4d^*(m) - 6.
\]
Hence $d^*(m) \geq \frac{7m}{2} + \frac32$, and $d^*(m) \geq R(m, 4, 3)$ since $d^*(m)$ is an integer.

(ii) Assume $\lambda_1 \leq t$. Since $\lambda \neq \pi$ and $|\lambda| = |\pi|$, $\lambda$ is not an arithmetic progression with step $-1$ (such a progression is determined by its sum), so $\lambda_{s + 1} < \lambda_s - 1$ for some $s$. Thus $\lambda_i \leq \lambda_1 - (i - 1)$ for all $i$, strictly for $i = s + 1$, and
\[
 |\pi| = \sum_{i = 1}^4\bigl(t - (i - 1)\bigr) \geq \sum_{i = 1}^4\bigl(\lambda_1 - (i - 1)\bigr) > \sum_{i = 1}^4\lambda_i = |\lambda|,
\]
a contradiction.
\end{proof}

\begin{lemma}
Over $\bb{F}_2$, the Frobenius identity gives, for every $F\in\bb{F}_2[x_1,\dots,x_4]$,
\begin{equation}\label{prp:frob}
 F(x_1,\dots,x_4)^2=F(x_1^2,\dots,x_4^2).
\end{equation}
\end{lemma}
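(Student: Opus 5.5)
The plan is to prove the identity first for monomials and then extend it to arbitrary $F$ by expanding the square, using that the ring has characteristic $2$. The key fact is that the squaring map $\phi\colon G\mapsto G^2$ on the commutative $\bb{F}_2$-algebra $\bb{F}_2[x_1,\dots,x_4]$ is additive. Indeed, for any $A,B$ we have
\[
 (A+B)^2=A^2+2AB+B^2=A^2+B^2,
\]
because $2AB=0$ in characteristic $2$.

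By induction on the number of summands, this additivity gives
\[
 \Bigl(\sum_{\alpha}c_\alpha x^\alpha\Bigr)^2=\sum_\alpha c_\alpha^2\,(x^\alpha)^2 .
\]
For a coefficient $c_\alpha\in\bb{F}_2$ we have $c_\alpha^2=c_\alpha$, since $0^2=0$ and $1^2=1$.

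It remains to handle a single monomial. Because the ring is commutative,
\[
 (x^\alpha)^2=x_1^{2\alpha_1}\cdots x_4^{2\alpha_4}=(x_1^2)^{\alpha_1}\cdots(x_4^2)^{\alpha_4},
\]
which is exactly the monomial $x^\alpha$ evaluated at $(x_1^2,\dots,x_4^2)$.

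Summing these monomial identities with the coefficients $c_\alpha$ gives $F^2=\sum_\alpha c_\alpha (x_1^2)^{\alpha_1}\cdots(x_4^2)^{\alpha_4}=F(x_1^2,\dots,x_4^2)$, which is \eqref{prp:frob}. The argument has no real obstacle. The only point needing care is the vanishing of the cross terms $2AB$, which is where characteristic $2$ is used.
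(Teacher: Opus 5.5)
Your argument is correct and is the standard Frobenius reasoning the paper invokes without writing out. Squaring is additive in characteristic $2$, it fixes coefficients in $\bb{F}_2$, and it sends each monomial $x^\alpha$ to $x^{2\alpha}$; the paper only names the identity, so your write-up just supplies these routine details.
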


\begin{corollary}\label[corollary]{cor:double}
For every $m\geq1$, $d^*(2m)=2d^*(m)$.
\end{corollary}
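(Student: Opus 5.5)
By the Frobenius identity \eqref{prp:frob} applied to $F=P^m$, we have $P^{2m}=(P^m)^2=P^m(x_1^2,\dots,x_4^2)$. The plan is to compare the supports of $P^{2m}$ and $P^m$ directly. After that, the equality $d^*(2m)=2d^*(m)$ follows from the description of $d^*$ as the least possible largest exponent of a monomial occurring with nonzero coefficient.

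First I will establish the support relation
\[
 \supp P^{2m}=\{2\alpha:\alpha\in\supp P^m\}.
\]
Write $P^m=\sum_{\alpha\in\supp P^m}x^\alpha$ over $\bb{F}_2$. Substituting $x_i\mapsto x_i^2$ gives $\sum_{\alpha}x^{2\alpha}$. The map $\alpha\mapsto2\alpha$ is injective, so distinct monomials stay distinct. Hence no cancellation can occur, and the coefficient of $x^{2\alpha}$ in $P^{2m}$ equals $[x^\alpha]P^m$. Every other monomial of $P^{2m}$ has coefficient $0$.

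Next I will compute the threshold. For $\theta=2\alpha$ we have $\max_i\theta_i=2\max_i\alpha_i$. Therefore
\[
 d^*(2m)=\min_{\theta\in\supp P^{2m}}\max_i\theta_i=\min_{\alpha\in\supp P^m}2\max_i\alpha_i=2d^*(m).
\]
This equivalent description of $d^*$ was noted right after \eqref{eq:algebraic-threshold}. In ideal language, $P^m\notin(x_i^{d+1})$ holds exactly when some $\alpha$ in the support has all $\alpha_i\le d$. The minimal such $d$ is the minimum over the support of the maximal entry.

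I do not expect a genuine obstacle. The only points needing care are that $P^m\neq0$, so the support is nonempty and $d^*(m)$ is finite, and that squaring is injective on exponent vectors. Nonvanishing holds because $P$ is a nonzero element of the integral domain $\bb{F}_2[x_1,\dots,x_4]$; \Cref{prp:P} exhibits it explicitly.
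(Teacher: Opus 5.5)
Your proposal is correct and follows the paper's proof: the paper's argument is exactly the Frobenius observation $\supp P^{2m}=\{2\theta:\theta\in\supp P^m\}$, which you derive via the substitution $x_i\mapsto x_i^2$ and the injectivity of $\alpha\mapsto 2\alpha$. You then read off $d^*(2m)=2d^*(m)$ from the min--max description of $d^*$, as the paper does implicitly.
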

\begin{proof}
By \eqref{prp:frob},
$\supp P^{2m}=\{2\theta:\theta\in\supp P^m\}$.
\end{proof}

\begin{lemma}\label[lemma]{lem:drop}
For $1\leq m\leq m'$,
\[
 d^*(m)+(m'-m)\leq d^*(m').
\]
\end{lemma}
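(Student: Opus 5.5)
The plan is to reduce to the case $m'=m+1$ and then telescope. Once we know $d^*(m+1)\geq d^*(m)+1$ for every $m\geq1$, summing this over $m,m+1,\dots,m'-1$ gives $d^*(m')\geq d^*(m)+(m'-m)$. The case $m'=m$ is trivial.

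For the step $m\to m+1$, take $\theta\in\supp P^{m+1}$ with $\max_i\theta_i=d^*(m+1)$. Such a $\theta$ exists by the characterization of $d^*$ as the least possible largest exponent of a monomial of $P^{m+1}$. Expand the product $P^{m+1}=P^m\cdot P$ coefficientwise over $\bb{F}_2$:
\[
 1=[x^\theta]P^{m+1}=\sum_{\pi\in\supp P}[x^{\theta-\pi}]P^m ,
\]
using the convention that coefficients at vectors with a negative entry vanish. Since the sum is nonzero, at least one term is nonzero. So there is some $\pi\in\supp P$ with $\theta-\pi\in\supp P^m$.

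Now use \Cref{prp:P}: every $\pi\in\supp P$ has all entries in $[1,7]$, and in particular $\pi_i\geq1$ for every $i$. This also follows directly from the factor $Q_{4,1}=x_1x_2x_3x_4$ in $P$. Hence every entry of $\theta-\pi$ is at most $\theta_i-1\leq d^*(m+1)-1$. Because $\theta-\pi$ is a monomial of $P^m$, the minimality in the definition of $d^*(m)$ gives
\[
 d^*(m)\leq\max_i(\theta-\pi)_i\leq d^*(m+1)-1 .
\]
This is exactly the required step.

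There is no real obstacle here. The only point needing care is the key fact that every monomial of $P$ has strictly positive exponent in each variable. That fact is what makes the maximum exponent drop by at least one when a factor $P$ is peeled off.
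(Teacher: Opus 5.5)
Your proof is correct and is essentially the paper's argument. The paper peels off $P^{m'-m}$ in one step, using that every monomial of $P^{m'-m}$ has all exponents at least $m'-m$ because of the factor $Q_{4,1}^{m'-m}$. You peel off a single factor $P$ and telescope, which amounts to the same thing.
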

\begin{proof}
Choose $\theta\in\supp P^{m'}$ with
$\max_i\theta_i=d^*(m')$. Since $P^{m'}=P^mP^{m'-m}$,
there exist $\varphi\in\supp P^m$ and
$\psi\in\supp P^{m'-m}$ such that $\theta=\varphi+\psi$.
As $\psi_i\geq m'-m$,
\[
 d^*(m)\leq\max_i\varphi_i
 \leq d^*(m')-(m'-m).
\]
\end{proof}

\begin{corollary}\label[corollary]{cor:interval}
Let $1\leq a\leq b$. If
$d^*(b)\leq d^*(a)+b-a$, then
\[
 d^*(m)=d^*(a)+m-a
 \qquad\text{for every integer }a\leq m\leq b.
\]
\end{corollary}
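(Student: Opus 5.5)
The argument will combine the two inequalities already available: the hypothesis gives an upper bound on $d^*(b)$, and \Cref{lem:drop} gives lower bounds on $d^*(m)$ and on $d^*(b)$ in terms of $d^*(m)$.

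Fix an integer $m$ with $a\leq m\leq b$. First I will apply \Cref{lem:drop} to the pair $a\leq m$. This gives
\[
 d^*(a)+(m-a)\leq d^*(m).
\]
Next I will apply \Cref{lem:drop} to the pair $m\leq b$. This gives $d^*(m)+(b-m)\leq d^*(b)$.

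Combining the second inequality with the hypothesis $d^*(b)\leq d^*(a)+b-a$, I will get
\[
 d^*(m)\leq d^*(b)-(b-m)\leq d^*(a)+b-a-(b-m)=d^*(a)+m-a.
\]
Together with the first inequality, this forces $d^*(m)=d^*(a)+m-a$.

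Both applications of \Cref{lem:drop} are legitimate, since $1\leq a\leq m\leq b$. The endpoint cases $m=a$ and $m=b$ need no separate treatment. No step here is a real obstacle: the only content is the monotonicity-with-slope statement of \Cref{lem:drop}, used once on each side of $m$.
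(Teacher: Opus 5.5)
Your proof is correct and is exactly the paper's argument: you apply \Cref{lem:drop} to $a\leq m$ and to $m\leq b$, then chain the second inequality with the hypothesis so that $d^*(m)$ is bounded above and below by $d^*(a)+m-a$.
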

\begin{proof}
By \Cref{lem:drop},
\[
 d^*(a)+m-a
 \leq d^*(m)
 \leq d^*(b)-(b-m)
 \leq d^*(a)+m-a.
\]
\end{proof}

\section{Frames and coefficient descent}
\begin{definition}
A \emph{frame} is a sequence $(X_q)$ in $\bb{Z}^4$, indexed by $q\geq0$ or $q\geq1$,
such that
\[
 X_{q+1}=2X_q,
 \qquad
 |X_q|=14\cdot2^q.
\]
A \emph{shape} is a vector $w\in\bb{Z}^4$ with $|w|=14$.
\end{definition}
\begin{definition}
For a fixed frame, put
\[
 c_q(w):=[x^{X_q-w}]P^{2^q-1}.
\]
The degrees agree, since
$|X_q-w|=14(2^q-1)$. At level zero (if $X_0$ is defined),
$c_0(w)=1$ precisely when $w=X_0$.
\end{definition}
\begin{lemma}[descent]\label[lemma]{lem:descent}
For every shape $w$ and $q\geq0$ whenever $X_q$ is defined,
\[
 c_{q+1}(w)
 =\sum_{\substack{\pi\in\supp P\ \pi\equiv w\pmod2}}
 c_q\left(\frac{w+\pi}{2}\right).
\]
\end{lemma}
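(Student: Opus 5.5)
The identity to use is $P^{2^{q+1}-1}=\bigl(P^{2^q-1}\bigr)^2P$. I will expand it with the Frobenius identity \eqref{prp:frob} and read off the coefficient of $x^{X_{q+1}-w}$.

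First, by \eqref{prp:frob},
\[
 \bigl(P^{2^q-1}\bigr)^2=\sum_{\theta\in\supp P^{2^q-1}}x^{2\theta}.
\]
Multiplying by $P=\sum_{\pi\in\supp P}x^\pi$ gives
\[
 [x^{\beta}]P^{2^{q+1}-1}=\sum_{\pi\in\supp P}\,[x^{(\beta-\pi)/2}]P^{2^q-1},
\]
where the summand is zero unless $\beta-\pi$ has all entries even and nonnegative. This uses the convention that coefficients at vectors with negative entries vanish; a vector with odd entries also contributes nothing.

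Next, take $\beta=X_{q+1}-w=2X_q-w$. The parity condition $\beta\equiv\pi\pmod 2$ is equivalent to $\pi\equiv w\pmod 2$, since $2X_q$ is even. In that case
\[
 \frac{\beta-\pi}{2}=X_q-\frac{w+\pi}{2}.
\]
The vector $(w+\pi)/2$ is an integer vector with $|(w+\pi)/2|=(14+14)/2=14$, so it is a shape, and the summand equals $c_q\bigl((w+\pi)/2\bigr)$. This is exactly the claimed formula.

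Two bookkeeping points need checking. Negative entries are handled consistently on both sides, because $c_q$ is defined through the same coefficient convention. At $q=0$ we have $P^{0}=1$, so $c_0(u)=1$ iff $u=X_0$, and the formula reads $c_1(w)=[x^{2X_0-w}]P$, which is consistent.

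The main step where care is needed is the Frobenius expansion. Over $\bb{F}_2$ the square of a sum of distinct monomials is the sum of their squares: the cross terms appear twice and cancel, and distinct $\theta$ give distinct $2\theta$, so no further cancellation occurs. The rest is direct coefficient extraction.
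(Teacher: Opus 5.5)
Your proof is correct and follows essentially the same route as the paper. You expand $P^{2^{q+1}-1}=P^{2^q-1}(x_1^2,\dots,x_4^2)\,P$ by Frobenius, match exponents via $2\theta+\pi=2X_q-w$, and note that $\pi\equiv w\pmod 2$ forces the unique $\theta=X_q-\frac{w+\pi}{2}$, where $\frac{w+\pi}{2}$ is a shape.
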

\begin{proof}
Put $N := 2^q - 1$. By the Frobenius identity (\Cref{prp:frob}),
\[
 P^{2N + 1} = P^N(x_1^2, \dots, x_4^2)\,P = \sum_{\theta \in \supp P^N} \sum_{\pi \in \supp P}x^{2\theta + \pi}.
\]
Since $X_{q + 1} = 2X_q$, the value $c_{q + 1}(w)$ is the number, mod $2$, of pairs $(\theta, \pi)$ in this double sum with $2\theta + \pi = 2X_q - w$ (there are none if $2X_q - w \notin \bb{Z}^4_{\geq0}$). 

For a given $\pi$, such a $\theta$ exists only if $\pi \equiv w \pmod 2$, and then it is unique: $\theta = X_q - v$ with $v := \frac{w + \pi}{2}$. This $v$ is a shape, since $|w| = |\pi| = 14$, and $X_q - v \in \supp P^N$ means exactly $c_q(v) = 1$. Summing over $\pi$ gives the formula.
\end{proof}
\begin{lemma}[symmetry]\label[lemma]{lem:sym}
If $\sigma\in S_4$ fixes the frame, then
$c_q(\sigma w)=c_q(w)$ for every $q\geq0$ whenever $X_q$ is defined.
If moreover $\sigma=(ij)$ and $w_i=w_j$, then
$c_q(w)=0$ for $q\geq1$.
\end{lemma}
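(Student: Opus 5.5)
The plan is to treat the two assertions separately; both reduce to the symmetry of $P^{2^q-1}$ and to the fact that ``$\sigma$ fixes the frame'' means $\sigma X_q=X_q$ for all $q$.

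\emph{First assertion.} Since $P$ is symmetric, so is $P^{2^q-1}$. Hence $[x^{\sigma\alpha}]P^{2^q-1}=[x^\alpha]P^{2^q-1}$ for every $\sigma\in S_4$ and every $\alpha\in\bb{Z}^4$. This also holds when $\alpha$ has a negative entry, since then both sides vanish by convention. Take $\alpha=X_q-w$. Because $\sigma X_q=X_q$, we get $\sigma\alpha=X_q-\sigma w$, so
\[
c_q(\sigma w)=[x^{X_q-\sigma w}]P^{2^q-1}=[x^{\sigma(X_q-w)}]P^{2^q-1}=[x^{X_q-w}]P^{2^q-1}=c_q(w).
\]
If the frame is indexed from $q\ge1$, only those $q$ are used, and the argument is identical.

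\emph{Second assertion.} Let $\sigma=(ij)$ fix the frame and suppose $w_i=w_j$. Fixing the frame means $(X_q)_i=(X_q)_j$, so $\alpha=X_q-w$ also satisfies $\alpha_i=\alpha_j$. For $q\ge1$ we have $2^q-1\ge1$. By \Cref{cor:distinct} with $k=4$, $n=3$, $m=2^q-1$, every monomial of $P^{2^q-1}$ has pairwise distinct exponents. Therefore $x^\alpha$ does not occur, and $c_q(w)=0$. If $\alpha$ has a negative entry, $c_q(w)=0$ trivially.

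At $q=0$ the exponent $2^0-1=0$ gives $P^0=1$, and $w=X_0$ may well have equal entries; this is why the restriction $q\ge1$ is needed. There is no real obstacle here. The only points needing care are the convention for negative exponents and the observation that frame-invariance under $(ij)$ transfers the equality $w_i=w_j$ to the exponent vector $X_q-w$.
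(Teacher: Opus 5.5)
Your proposal is correct and is essentially the paper's proof. You set $\theta = X_q - w$ and use $\sigma X_q = X_q$ to get $X_q - \sigma w = \sigma\theta$, which gives the first claim by symmetry of $P^{2^q-1}$; for $\sigma=(ij)$ you transfer $w_i=w_j$ to $\theta_i=\theta_j$ and apply \Cref{cor:distinct} with $2^q-1\geq1$. Your remarks on the negative-exponent convention and on why $q\geq1$ is needed are accurate additions.
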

\begin{proof}
Put $N := 2^q - 1$ and $\theta := X_q - w$. Since $\sigma X_q = X_q$, we have $X_q - \sigma w = \sigma\theta$, so $c_q(\sigma w) = [x^{\sigma\theta}]P^N = [x^\theta]P^N = c_q(w)$, as $P^N$ is symmetric. 

If $\sigma = (ij)$, then $\sigma X_q = X_q$ means $(X_q)_i = (X_q)_j$, so $w_i = w_j$ gives $\theta_i = \theta_j$, and $c_q(w) = [x^\theta]P^N = 0$ by \Cref{cor:distinct} ($N \geq 1$ as $q \geq 1$).
\end{proof}

\section{The case $m=2^q-1$}\label{sec:dyadic}

For $q\geq1$, consider the frame and shape
\[
 X_q:=2^{q-1}(7,7,7,7),
 \qquad
 \delta:=(2,3,4,5).
\]
Let $c_q$ be its coefficient
function. Then
\[
 X_q-\delta
 =\theta_q
 :=(R_q,R_q-1,R_q-2,R_q-3).
\]
Thus $c_q(\delta)$ is the coefficient of the staircase
partition $\theta_q$.

\begin{theorem}\label[theorem]{thm:AP}
For every $q\geq1$,
\[
 c_q(\delta) = [x^{\theta_q}]P^{2^q-1} = 1.
\]
\end{theorem}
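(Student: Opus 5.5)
The proof goes by induction on $q$, using the descent formula (\Cref{lem:descent}) for the fully symmetric frame $X_q=2^{q-1}(7,7,7,7)$. For this frame $X_{q+1}=2X_q$ and $|X_q|=14\cdot 2^q$, so it is a frame indexed by $q\geq1$.

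\emph{Base case.} For $q=1$ we have $c_1(\delta)=[x^{(5,4,3,2)}]P$. This equals $1$ by \Cref{prp:P}, because $m_{(5,4,3,2)}$ appears in $P$ and the partitions in the expansion are distinct.

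\emph{Inductive step.} Assume $c_q(\delta)=1$ for some $q\geq1$. By \Cref{lem:descent},
\[
 c_{q+1}(\delta)=\sum_{\substack{\pi\in\supp P\\ \pi\equiv\delta\pmod 2}} c_q(v_\pi),
 \qquad v_\pi:=\tfrac{\delta+\pi}{2}.
\]
Rather than treating all parity-compatible $\pi$, we first discard most terms using \Cref{thm:lower}(ii). The term $c_q(v)$ is the coefficient of $X_q-v$ in $P^{2^q-1}$. By \Cref{cor:distinct}, it can be nonzero only if $X_q-v$ has distinct entries. Moreover $|X_q-v|=|\theta_q|$ with $\theta_q=X_q-\delta$ a staircase.

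\begin{itemize}
 \item If $X_q-v$ is nonzero, its sorted form is a strict partition of the same size as $\theta_q$.
 \item By \Cref{thm:lower}(ii), that sorted form either equals $\theta_q$ or has largest part exceeding $R_q$.
 \item The largest part of $X_q-v$ is $7\cdot2^{q-1}-\min_i v_i$, and $R_q=7\cdot 2^{q-1}-2$. So exceeding $R_q$ means $\min_i v_i<2$, and equalling $\theta_q$ means $v$ is a permutation of $\delta$.
 \item Since every entry of $\pi$ is at least $1$, we have $v_1=(2+\pi_1)/2\geq 3/2$, and similarly $v_2\geq2$, $v_3,v_4\geq 5/2$. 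Hence $\min_i v_i\geq 2$ when $v$ is integral.
\end{itemize}

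Therefore the only possibly nonzero terms are those with $v_\pi$ a permutation of $(2,3,4,5)$. By \Cref{lem:sym}, since $S_4$ fixes the frame, each such term equals $c_q(\delta)=1$ by the induction hypothesis.

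It remains to count, mod $2$, the permutations $v$ of $(2,3,4,5)$ for which $\pi=2v-\delta=(2v_1-2,\,2v_2-3,\,2v_3-4,\,2v_4-5)$ is a rearrangement of one of $(7,4,2,1)$, $(6,5,2,1)$, $(6,4,3,1)$, $(5,4,3,2)$. Positivity of the entries of $\pi$ forces $v_3,v_4\geq3$, so $2\in\{v_1,v_2\}$. This leaves twelve candidates to check.

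\emph{Case $v_1=2$.} The admissible choices are $v=(2,3,4,5)$, $(2,4,5,3)$ and $(2,5,4,3)$. They give $\pi=(2,3,4,5)$, $(2,5,6,1)$ and $(2,7,4,1)$, which are rearrangements of $(5,4,3,2)$, $(6,5,2,1)$ and $(7,4,2,1)$ respectively.

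\emph{Case $v_2=2$.} The admissible choices are $v=(3,2,5,4)$ and $(4,2,3,5)$. They give $\pi=(4,1,6,3)$ and $(6,1,2,5)$, which are rearrangements of $(6,4,3,1)$ and $(6,5,2,1)$.

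In every other candidate $\pi$ has a repeated entry or an entry $8$, so it is not in $\supp P$. There are thus exactly $5$ contributing terms, an odd number, and hence $c_{q+1}(\delta)=1$.

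The main obstacle is ensuring that no non-permutation $v$ contributes, since its value $c_q(v)$ is not controlled by the induction hypothesis. The staircase minimality argument of \Cref{thm:lower}(ii) handles this uniformly, so the remaining work is the finite parity count above.
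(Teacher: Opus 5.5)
Your proof is correct and follows the paper's argument: induction on $q$ via \Cref{lem:descent} for the $S_4$-invariant frame, with exactly five surviving terms, all permutations of $\delta$, each equal to $c_q(\delta)$ by \Cref{lem:sym}. The only difference is how the other terms are discarded: the paper tabulates all $16$ parity-compatible $\pi$ and kills the $11$ shapes with repeated entries by the second part of \Cref{lem:sym}, whereas you show in advance, from $\min_i v_i\geq 2$ and \Cref{thm:lower}(ii), that only permutations of $\delta$ can contribute, and then check the $12$ permutations with $2\in\{v_1,v_2\}$ — a slightly more conceptual organization of the same finite check.
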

\begin{proof} For $q=1$,
\[
 c_1(\delta)=[x^{(5,4,3,2)}]P=1
\]
by \Cref{prp:P}.

\Cref{lem:descent} gives
\[
 c_{q+1}(\delta)
 =\sum_{\substack{\pi\in\supp P\\
                  \pi\equiv(0,1,0,1)\pmod2}}
 c_q\left(\frac{\delta+\pi}{2}\right).
\]
Each partition in \Cref{prp:P} has two even and two odd
entries, so the sum has $16$ terms, listed below.
The shapes with distinct entries are shown in bold.
\[
\renewcommand{\arraystretch}{1.2}
\begin{array}{c|cc|cc}
 \lambda
 & \pi & (\delta+\pi)/2
 & \pi & (\delta+\pi)/2\\ \hline
 (7,4,2,1)
 & (4,7,2,1) & (3,5,3,3)
 & (4,1,2,7) & (3,2,3,6)\\
 & (2,7,4,1) & \mathbf{(2,5,4,3)}
 & (2,1,4,7) & (2,2,4,6)\\ \hline
 (6,5,2,1)
 & (6,5,2,1) & (4,4,3,3)
 & (6,1,2,5) & \mathbf{(4,2,3,5)}\\
 & (2,5,6,1) & \mathbf{(2,4,5,3)}
 & (2,1,6,5) & (2,2,5,5)\\ \hline
 (6,4,3,1)
 & (6,3,4,1) & (4,3,4,3)
 & (6,1,4,3) & (4,2,4,4)\\
 & (4,3,6,1) & (3,3,5,3)
 & (4,1,6,3) & \mathbf{(3,2,5,4)}\\ \hline
 (5,4,3,2)
 & (4,5,2,3) & (3,4,3,4)
 & (4,3,2,5) & (3,3,3,5)\\
 & (2,5,4,3) & (2,4,4,4)
 & (2,3,4,5) & \mathbf{(2,3,4,5)}
\end{array}
\]
Eleven shapes have repeated entries and vanish by
\Cref{lem:sym}. The remaining five are permutations of
$\delta$. Since $X_q$ is invariant under $S_4$,
\Cref{lem:sym} gives
\[
 c_{q+1}(\delta)=5c_q(\delta)=c_q(\delta).
\]
Hence $c_q(\delta)=1$ for every $q\geq1$.
\end{proof}

\begin{corollary}\label[corollary]{cor:dyadic}
For every $j\geq1$,
\[
 d^*(2^j-1)=R_j=7\cdot2^{j-1}-2.
\]
\end{corollary}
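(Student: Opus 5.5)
The proof combines the two inequalities already available. For the upper bound, \Cref{thm:AP} with $q=j$ gives $[x^{\theta_j}]P^{2^j-1}=1$, where $\theta_j=(R_j,R_j-1,R_j-2,R_j-3)$. So $P^{2^j-1}$ has a monomial whose largest exponent is $R_j$. Its exponents are all at most $R_j$, hence it is not in $(x_1^{R_j+1},\dots,x_4^{R_j+1})$, and therefore $d^*(2^j-1)\leq R_j$.

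For the lower bound, \Cref{thm:lower}(i) gives $d^*(2^j-1)\geq R(2^j-1,4,3)$. It remains to check that this value equals $7\cdot2^{j-1}-2$. With $m=2^j-1$,
\[
 \frac{7m}{2}+\frac32=\frac{7\cdot2^j-7+3}{2}=7\cdot2^{j-1}-2 .
\]
This is an integer for $j\geq1$, so the ceiling in $R(m,4,3)$ changes nothing and $R(2^j-1,4,3)=R_j$.

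The consistency check is that $|\theta_j|=4R_j-6=14(2^j-1)$, which is the correct degree of $P^{2^j-1}$. This holds because $X_j-\delta=\theta_j$ was set up so that $|X_j|=14\cdot2^j$ and $|\delta|=14$.

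Nothing here is a real obstacle, since all the work was done in \Cref{thm:AP}. The only point needing care is the arithmetic showing that the ceiling is exact, and it is.
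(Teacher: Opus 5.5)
Your proposal is correct and follows the paper's proof. The upper bound comes from the staircase monomial of \Cref{thm:AP} with $q=j$, and the lower bound from \Cref{thm:lower}(i). Your check that the ceiling in $R(2^j-1,4,3)$ is exact is a small addition that the paper builds into its definition of $R_j$.
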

\begin{proof}
By \Cref{thm:AP}, the staircase monomial gives $d^*(2^j-1)\leq R_j$;
the reverse inequality is \Cref{thm:lower}(i).
\end{proof}

\section{The remaining values of $m$}\label{sec:all}

By \Cref{cor:double,cor:dyadic,cor:interval}, it suffices
to construct, for $j,q\geq1$, a monomial of
$P^{2^{q+1}(2^j-1)+2^q-1}$ whose largest exponent is
$2^{q+1}R_j+2^q-1$.
We first introduce another frame, which will help us find such a monomial.

\subsection{A second frame}
Consider the frame
\[
 C_q:=2^q(1,3,5,5),\qquad q\geq0,
\]
and let $c_q$ be its coefficient function. Since $C_q$
is fixed by the transposition $(34)$, \Cref{lem:sym} gives
\begin{equation}\label{eq:C-symmetry}
 c_q(w_1,w_2,w_3,w_4)=c_q(w_1,w_2,w_4,w_3).
\end{equation}
We need only shapes with first entry $1$.

Let $y := (x_2, x_3, x_4)$ and let $\chi_1, \chi_2, \chi_3 \in \bb{Z}^3$ be the unit vectors: $\chi_l$ has $1$ at the place $l$ and $0$ elsewhere. Thus $y^{\chi_l} = x_{l + 1}$, and multiplying a monomial $y^\beta$ by $y^{\chi_l}$ turns its exponent $\beta$ into $\beta + \chi_l$.
\begin{lemma}\label[lemma]{lem:x1}
The monomials of $P$ in which $x_1$ has exponent $1$ are the $18$ monomials $x_1y^{2\sigma + \chi_l}$, where $\sigma$ runs over the permutations of $(1, 2, 3)$ and $l \in \{1, 2, 3\}$. Each vector $2\sigma + \chi_l$ has exactly one odd entry, at the place $l$.
\end{lemma}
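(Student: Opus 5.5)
The plan is a direct read-off from the explicit expansion in \Cref{prp:P}. Since $P$ is a sum of four monomial symmetric polynomials $m_\lambda$, its monomials are exactly the distinct permutations of $\lambda\in\{(7,4,2,1),(6,5,2,1),(6,4,3,1),(5,4,3,2)\}$, each with coefficient $1$. Each $\lambda$ has distinct entries, so its orbits are disjoint and carry no cancellation. A monomial with $x_1$-exponent $1$ comes from a permutation placing the entry $1$ in the first coordinate. This rules out $(5,4,3,2)$ and leaves the three partitions $(7,4,2,1)$, $(6,5,2,1)$, $(6,4,3,1)$. For each of these, the remaining three entries $(7,4,2)$, $(6,5,2)$, $(6,4,3)$ are distinct, so each gives $3!=6$ arrangements in $(x_2,x_3,x_4)$. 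That makes $18$ monomials in total.

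The second step is to match these with the vectors $2\sigma+\chi_l$. For $\sigma$ a permutation of $(1,2,3)$, the vector $2\sigma$ is a permutation of $(2,4,6)$. Adding $\chi_l$ raises one entry by one, turning $2\to3$, $4\to5$ or $6\to7$. The resulting multisets are $\{3,4,6\}$, $\{2,5,6\}$, $\{2,4,7\}$, which are exactly the three triples found above.

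Next I will check that the map $(\sigma,l)\mapsto 2\sigma+\chi_l$ is injective, so that $6\cdot3=18$ pairs give $18$ distinct vectors. Parity settles this. The place $l$ is recovered as the position of the unique odd entry, since $2\sigma$ is all even and $\chi_l$ changes exactly one coordinate's parity. Then $\sigma=(\beta-\chi_l)/2$. The images cover all $18$ monomials: each of the $18$ vectors arises from a distinct pair, so the two sets have equal size and one contains the other.

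The final claim, that exactly one entry is odd and it sits at place $l$, was already used in the injectivity step. There is no real obstacle here. The only point needing care is confirming that no $x_1$-exponent-$1$ monomial comes from $(5,4,3,2)$, and that the three residual triples each contain exactly one odd number. For the latter, $(7,4,2)$, $(6,5,2)$ and $(6,4,3)$ each have one odd entry, consistent with the description.
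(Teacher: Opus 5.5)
Your proof is correct, but it takes a different route from the paper's. You read the answer off the explicit expansion in \Cref{prp:P}. The monomials with $x_1$-exponent $1$ are the permutations of $(7,4,2,1)$, $(6,5,2,1)$, $(6,4,3,1)$ that put $1$ first, which gives $18$ exponent vectors in $(x_2,x_3,x_4)$. You then show that $(\sigma,l)\mapsto 2\sigma+\chi_l$ is an injection of $18$ pairs into this $18$-element set: its image lies there because raising one entry of a permutation of $(2,4,6)$ by one gives a permutation of $(6,4,3)$, $(6,5,2)$ or $(7,4,2)$.

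The paper does not use \Cref{prp:P} at this point. It writes $P=x_1F$ and reduces $F$ modulo $x_1$ factor by factor, using $Q_{4,2}\equiv Q_{3,1}Q_{3,2}$ and $Q_{4,3}\equiv Q_{3,2}Q_{3,3}$. This gives $\bar F=(x_2+x_3+x_4)\,m_{(3,2,1)}(y)^2$, so the form $y^{2\sigma+\chi_l}$ appears directly: $\chi_l$ comes from the linear factor and $2\sigma$ from the Frobenius square.

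Your route is shorter once \Cref{prp:P} is available and amounts to a finite check. The paper's route is independent of that expansion. It also explains structurally why the $x_1$-linear part of $P$ is ``a linear form times a square,'' which is the source of the parity dichotomy in \Cref{lem:bottom}. The same reduction works verbatim for general $(k,n)$ and gives $\bar F=P_{k-1,n-1}(y)^2\,Q_{k-1,n}(y)$.

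One small correction: the four orbits in \Cref{prp:P} are disjoint simply because the partitions are distinct, not because their entries are distinct. Distinct entries are what you need for the count $3!=6$, and you do check that.
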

\begin{proof}
Since $x_1$ divides $P$, reduction modulo $x_1^2$ keeps exactly the monomials of $P$ with $x_1$-exponent $1$. As $Q_{4, 1} = x_1x_2x_3x_4$, we have $P = x_1F$ with $F := x_2x_3x_4\,Q_{4, 2}Q_{4, 3}$. Let $\bar F$ be the image of $F$ in $\bb{F}_2[x_1, \dots, x_4]/(x_1) \cong \bb{F}_2[y]$; then $F \equiv \bar F \pmod{x_1}$ gives $P \equiv x_1\bar F \pmod{x_1^2}$, and it remains to find $\bar F$. Below, $Q_{3, j}$ stands for $Q_{3, j}(y)$.

The polynomial $Q_{4, 2}$ is the product of six linear forms. Modulo $x_1$, the three forms $x_1 + x_j$ become $x_j$, with product $x_2x_3x_4 = Q_{3, 1}$, while the other three, $x_i + x_j$ with $2 \leq i < j \leq 4$, do not contain $x_1$ and form $Q_{3, 2}$; so $Q_{4, 2} \equiv Q_{3, 1}Q_{3, 2} \pmod{x_1}$. 

Likewise, the three forms $x_1 + x_i + x_j$ of $Q_{4, 3}$ become the forms $x_i + x_j$ of $Q_{3, 2}$, and its fourth form is $x_2 + x_3 + x_4 = Q_{3, 3}$; so $Q_{4, 3} \equiv Q_{3, 2}Q_{3, 3} \pmod{x_1}$. Together with $x_2x_3x_4 = Q_{3, 1}$, this gives $\bar F = Q_{3, 3}(Q_{3, 1}Q_{3, 2})^2 = (x_2 + x_3 + x_4)\,m_{(3, 2, 1)}(y)^2$ by \Cref{lem:vdm}.

By \Cref{prp:frob}, $m_{(3, 2, 1)}(y)^2 = \sum_\sigma y^{2\sigma}$, and multiplication by $x_2 + x_3 + x_4$ adds some $\chi_l$ to the exponent. No terms cancel: $2\sigma + \chi_l$ determines $l$ as the place of its only odd entry, and then $\sigma$.
\end{proof}
\begin{lemma}[descent at the bottom]\label[lemma]{lem:bottom}
Let $q \geq 0$ and $w' \in \bb{Z}^3$ with $|w'| = 13$; then $w'$ has one or three odd entries.
\begin{enumerate}[label=(\alph*)]
 \item If $w'$ has three odd entries, then $c_{q + 1}(1, w') = 0$.
 \item If $w'$ has one odd entry, at the place $i$, and $u := \frac{w' + \chi_i}{2}$ (that is, $u = \lceil w'/2\rceil$, rounded up entrywise), then
 \[
  c_{q + 1}(1, w') = \sum_\sigma c_q(1, u + \sigma),
 \]
 the sum over the six permutations $\sigma$ of $(1, 2, 3)$.
\end{enumerate}
\end{lemma}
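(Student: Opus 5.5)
We apply \Cref{lem:descent} to the frame $C_q$ with the shape $w=(1,w')$:
\[
 c_{q+1}(1,w')=\sum_{\substack{\pi\in\supp P\\ \pi\equiv(1,w')\pmod2}} c_q\left(\frac{(1,w')+\pi}{2}\right).
\]
The first step is to show that only those $\pi\in\supp P$ with $\pi_1=1$ contribute. Parity forces $\pi_1$ to be odd. Every entry of $\pi$ lies in $[1,7]$ by \Cref{prp:P}, so a priori $\pi_1\in\{1,3,5,7\}$. If $\pi_1\geq3$, then the new shape $v=((1,w')+\pi)/2$ has $v_1=(1+\pi_1)/2\geq2$, and the monomial $x^{C_q-v}$ has first exponent $2^q-v_1$. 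We would like this to be negative. For $q\geq1$, however, $2^q-v_1$ need not be negative, so this crude bound does not suffice.

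We therefore argue directly on exponents. The coefficient $c_{q+1}(1,w')$ is the coefficient in $P^{2^{q+1}-1}=P^{2N}P$, with $N=2^q-1$, of a monomial whose $x_1$-exponent is $2^{q+1}-1$. Every monomial of $P^{N}$ has $x_1$-exponent at least $N$, because $x_1^N$ divides $P^N$. Hence every monomial of $P^{2N}=P^N(x^2)$ has $x_1$-exponent at least $2N=2^{q+1}-2$. The factor taken from $P$ then has $x_1$-exponent at most $1$, and since $x_1\mid P$ it equals exactly $1$. So only $\pi$ with $\pi_1=1$ contribute, and each of these gives $v_1=1$.

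By \Cref{lem:x1}, the $\pi$ with $\pi_1=1$ are exactly $(1,2\sigma+\chi_l)$, and $2\sigma+\chi_l$ has exactly one odd entry, at the place $l$. The parity condition $\pi\equiv(1,w')$ then requires that $w'$ have exactly one odd entry, located at the place $l$.

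Part (a) follows at once. If $w'$ has three odd entries, no $\pi$ satisfies the parity condition, the sum is empty, and $c_{q+1}(1,w')=0$. The claim that $w'$ has one or three odd entries is immediate from $|w'|=13$ being odd.

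For part (b), let $w'$ have its single odd entry at the place $i$. The contributing $\pi$ are then exactly $(1,2\sigma+\chi_i)$, one for each of the six permutations $\sigma$ of $(1,2,3)$. Each contributes the term
\[
 c_q\bigl(1,(w'+2\sigma+\chi_i)/2\bigr)=c_q(1,u+\sigma).
\]
Summing over $\sigma$ gives the stated formula.

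The main obstacle is justifying the restriction to $\pi_1=1$. I would handle it by the divisibility argument above rather than by bounding the frame, checking that the $x_1$-exponent of $X_{q+1}-(1,w')$ is exactly $2^{q+1}-1$. For $q=0$ the argument needs $N=0$, so that $P^0=1$, and it should be verified that it still goes through in that case.
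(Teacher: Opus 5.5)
Your proof is correct and is essentially the paper's argument. Both use $x_1^{2^q-1}\mid P^{2^q-1}$ to force $\pi_1=1$ in the descent sum, then read off the parity condition and the shapes $(1,u+\sigma)$ from \Cref{lem:x1}; the $q=0$ case you flag needs no separate check, since $x_1^0\mid P^0=1$ holds trivially and the same argument applies verbatim.
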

\begin{proof}
By \Cref{lem:descent}, $c_{q + 1}(1, w')$ is the sum of $c_q(v)$, $v := \frac{(1, w') + \pi}{2}$, over the $\pi \in \supp P$ with $\pi \equiv (1, w') \pmod 2$. Since $x_1$ divides $P$, $c_q(v) \neq 0$ needs the first entry $2^q - v_1$ of $C_q - v$ to be at least $2^q - 1$, that is, $\pi_1 = 1$. By \Cref{lem:x1} such $\pi$ are $(1, 2\sigma + \chi_l)$, and $2\sigma + \chi_l \equiv \chi_l \pmod 2$.

So if $w'$ has three odd entries, no $\pi$ has the parity of $(1, w')$, which proves (a). If $w' \equiv \chi_i \pmod 2$, exactly the six $\pi$ with $l = i$ have it, and for them $v = (1, u + \sigma)$, which proves (b).
\end{proof}
\begin{corollary}\label[corollary]{cor:pair}
Let $q \geq 0$ and $u \in \bb{Z}^3$ with $|u| = 7$. If the last two entries of $u$ are equal, then
\[
 c_{q+1}(1,2u-\chi_i)=0,
 \qquad i=1,2,3.
\]
\end{corollary}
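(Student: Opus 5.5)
We plan to apply \Cref{lem:bottom} to $w' := 2u-\chi_i$. This vector satisfies $|w'| = 14-1 = 13$, and it has exactly one odd entry, at the place $i$. Part (b) of \Cref{lem:bottom} therefore applies, with $\lceil w'/2\rceil = (2u-\chi_i+\chi_i)/2 = u$, and gives
\[
 c_{q+1}(1,2u-\chi_i)=\sum_\sigma c_q(1,u+\sigma),
\]
where the sum runs over the six permutations $\sigma$ of $(1,2,3)$. It therefore suffices to show that this sum vanishes modulo $2$.

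Write $u=(u_1,a,a)$. The key step is to pair the six terms using the transposition $(34)$ of the frame coordinates, which is the symmetry \eqref{eq:C-symmetry}. For a permutation $\sigma=(\sigma_1,\sigma_2,\sigma_3)$, let $\sigma'=(\sigma_1,\sigma_3,\sigma_2)$. Then $\sigma\mapsto\sigma'$ is a fixed-point-free involution on the permutations of $(1,2,3)$, because $\sigma_2\neq\sigma_3$. Since the last two entries of $u$ are equal, $(1,u+\sigma')$ is obtained from $(1,u+\sigma)$ by swapping its third and fourth entries. Hence \eqref{eq:C-symmetry} gives $c_q(1,u+\sigma')=c_q(1,u+\sigma)$.

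The six terms thus split into three pairs of equal values, so the sum is $0$ in $\bb{F}_2$. This holds for every $i\in\{1,2,3\}$, since $i$ entered only through the identification of $u$.

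The only point needing care is that \eqref{eq:C-symmetry} holds for all $q\geq0$, including $q=0$. It does: the first statement of \Cref{lem:sym} needs only that $(34)$ fixes $C_q$, and not $q\geq1$. We also do not need any entries of $u+\sigma$ to be nonnegative, since $c_q$ is defined as $0$ for exponent vectors with negative entries, and the symmetry argument respects this convention. I expect no real obstacle here; the main thing to check is the parity and rounding identification in \Cref{lem:bottom}(b).
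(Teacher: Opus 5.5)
Your proposal is correct and follows essentially the same route as the paper: you apply \Cref{lem:bottom}(b) to $w'=2u-\chi_i$ to obtain $\sum_\sigma c_q(1,u+\sigma)$. You then pair the six terms via the fixed-point-free involution swapping the last two entries of $\sigma$, with equal values by \eqref{eq:C-symmetry}. Your remark that \eqref{eq:C-symmetry} needs only the first part of \Cref{lem:sym}, and hence holds at $q=0$, is a correct check that the paper leaves implicit.
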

\begin{proof}
The vector $2u - \chi_i$ has its only odd entry at the place $i$, and $\frac{(2u - \chi_i) + \chi_i}{2} = u$. So by \Cref{lem:bottom}(b),
\[
 c_{q + 1}(1, 2u - \chi_i) = \sum_\sigma c_q(1, u + \sigma).
\]
Let $\tau$ exchange the last two entries of a vector in $\bb{Z}^3$. Since $\tau u = u$, we have $\tau(u + \sigma) = u + \tau\sigma$, so $\tau$ permutes the six vectors $u + \sigma$; it fixes none of them, because the last two entries of $\sigma$ differ. Hence the six terms split into three pairs $c_q(1, u + \sigma)$, $c_q(1, u + \tau\sigma)$, and the two terms of each pair are equal by \eqref{eq:C-symmetry}. So the sum is even, that is, $0$.
\end{proof}

\begin{corollary}\label[corollary]{cor:two}
For every $q\geq1$,
\[
 c_q(1,3,6,4)=1,
 \qquad
 c_q(1,6,3,4)=0.
\]
Consequently,
\[
 \nu_q:=
 (2^q-1,\;3\cdot2^q-3,\;5\cdot2^q-6,\;5\cdot2^q-4)
 \in\supp P^{2^q-1},
\]
whereas $C_q-(1,6,3,4)\notin\supp P^{2^q-1}$.
\end{corollary}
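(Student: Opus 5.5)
The plan is to reduce both values to \Cref{lem:bottom} and \Cref{cor:pair}, and to get the first one by induction on $q$ from a one-step recursion.

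\emph{The zero value.} For $w=(1,6,3,4)$ we have $w'=(6,3,4)$, whose only odd entry is at place $2$. Writing $w'=2u-\chi_2$ with $u=(3,2,2)$, the last two entries of $u$ are equal and $|u|=7$. So \Cref{cor:pair} gives $c_{q}(1,6,3,4)=0$ for every $q\geq1$, applying it with $q-1\geq0$. This needs no induction, and it is exactly the statement $C_q-(1,6,3,4)\notin\supp P^{2^q-1}$.

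\emph{The unit value, recursion.} For $w=(1,3,6,4)$ we have $w'=(3,6,4)$, with its only odd entry at place $1$, and $u=(2,3,2)$. \Cref{lem:bottom}(b) then gives $c_{q+1}(1,3,6,4)=\sum_\sigma c_q(1,u+\sigma)$. The six vectors $u+\sigma$ are
\[
(3,5,5),\ (3,6,4),\ (4,4,5),\ (4,6,3),\ (5,4,4),\ (5,5,3).
\]
I would dispose of the terms one at a time, for $q\geq1$:
\begin{itemize}
\item $(1,3,5,5)$ and $(1,5,4,4)$ vanish by \Cref{lem:sym}, since $C_q$ is fixed by $(34)$ and these shapes have equal third and fourth entries.
\item $(1,5,5,3)$ has $w'=(5,5,3)$ with three odd entries, so it vanishes by \Cref{lem:bottom}(a) at level $q-1$.
\item $(1,3,6,4)$ is the term we want to keep.
\end{itemize}

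The main point, and the step I expect to be the delicate one, is the remaining pair $(1,4,4,5)$ and $(1,4,6,3)$. Neither is killed directly. By \eqref{eq:C-symmetry}, $c_q(1,4,6,3)=c_q(1,4,3,6)$. Now $(4,4,5)$ and $(4,3,6)$ each have a single odd entry, at places $3$ and $2$ respectively, and both round up entrywise to the same vector $u'=(2,2,3)$. Hence \Cref{lem:bottom}(b) at level $q-1$ expresses both coefficients as the same sum $\sum_\sigma c_{q-1}(1,u'+\sigma)$, so they cancel over $\bb{F}_2$. This leaves
\[
c_{q+1}(1,3,6,4)=c_q(1,3,6,4)\qquad\text{for } q\geq1.
\]

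\emph{Base case and conclusion.} At $q=1$ we have $C_1=(2,6,10,10)$ and $C_1-(1,3,6,4)=(1,3,4,6)$. This is a permutation of $(6,4,3,1)$, which lies in $\supp P$ by \Cref{prp:P}, so $c_1(1,3,6,4)=1$. Induction then gives $c_q(1,3,6,4)=1$ for all $q\geq1$. Finally, $C_q-(1,3,6,4)=\nu_q$ by direct computation, which yields the stated membership $\nu_q\in\supp P^{2^q-1}$.
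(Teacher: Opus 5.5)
Your proof is correct and is essentially the paper's argument: the zero value comes from \Cref{cor:pair} with $u=(3,2,2)$, and the unit value comes from the recursion of \Cref{lem:bottom}(b) with $u=(2,3,2)$ over the same six vectors. The differences are only bookkeeping: you kill $(1,3,5,5)$ and $(1,5,4,4)$ directly by \Cref{lem:sym}; you cancel $(1,4,4,5)$ against $(1,4,6,3)$ because both reduce to $u'=(2,2,3)$, whereas the paper notes that all three surviving terms equal $s_{q-1}$ and uses $3\equiv1$; and you start the induction at $q=1$ via \Cref{prp:P} instead of at level zero.
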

\begin{proof}
For $w'=(6,3,4)$, \Cref{lem:bottom} gives $u=(3,2,2)$,
so \Cref{cor:pair} yields $c_q(1,6,3,4)=0$ for $q\geq1$.

For the first coefficient, put $u:=(2,3,2)$ and
\[
 s_q:=c_{q + 1}(1, 3, 6, 4) = \sum_\sigma c_q(1,u+\sigma).
\]
The six vectors $u+\sigma$ are
\[
 (3,5,5),\quad(5,5,3),\quad(5,4,4),\quad
 (3,6,4),\quad(4,6,3),\quad(4,4,5).
\]
At level zero only $(3,5,5)$ contributes, so $s_0=1$.
For $q\geq1$, the first two coefficients vanish by
\Cref{lem:bottom}, and the third by \Cref{cor:pair}.
Hence
\[
 s_q=c_q(1,3,6,4)+c_q(1,4,6,3)+c_q(1,4,4,5).
\]
By \Cref{lem:bottom},
\[
 c_q(1,3,6,4)
 =c_q(1,4,5,4)
 =c_q(1,4,6,3)
 =s_{q-1}.
\]
Also $c_q(1,4,4,5)=c_q(1,4,5,4)$ by
\eqref{eq:C-symmetry}. Thus $s_q=3s_{q-1}=s_{q-1}$,
and $s_q=1$ for all $q\geq0$.
Finally, $c_q(1,3,6,4)=s_{q-1}=1$ for $q\geq1$.
\end{proof}

\subsection{The endpoint monomial}

\begin{theorem}\label[theorem]{thm:T}
Let $j,q\geq1$, and put
\[
 m:=2^{q+1}(2^j-1)+2^q-1,\qquad
 \theta^0:=(R_j,R_j-1,R_j-2,R_j-3),
 \qquad
 T:=2^{q+1}\theta^0+\nu_q.
\]
Then $T\in\supp P^m$ and
\[
 \max_iT_i=T_1=2^{q+1}R_j+2^q-1.
\]
\end{theorem}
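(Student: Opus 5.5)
The plan is to expand $P^m$ with the Frobenius identity and show that exactly one pair of monomials produces $x^T$. Since $m=2^{q+1}(2^j-1)+(2^q-1)$, \eqref{prp:frob} gives
\[
 P^m=P^{2^j-1}\bigl(x_1^{2^{q+1}},\dots,x_4^{2^{q+1}}\bigr)\,P^{2^q-1}.
\]
So $[x^T]P^m$ is the parity of the number of pairs $(\theta,\nu)$ with $\theta\in\supp P^{2^j-1}$, $\nu\in\supp P^{2^q-1}$ and $2^{q+1}\theta+\nu=T$. The pair $(\theta^0,\nu_q)$ is one such pair: $\theta^0\in\supp P^{2^j-1}$ by \Cref{thm:AP}, and $\nu_q\in\supp P^{2^q-1}$ by \Cref{cor:two}. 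The formula for $\max_iT_i$ is immediate, since $T_1=2^{q+1}R_j+2^q-1$ and the other entries are smaller; for example $T_2=2^{q+1}R_j-2^{q+1}+3\cdot2^q-3=2^{q+1}R_j+2^q-3$, and similarly for $T_3,T_4$. It therefore remains to show that no other pair occurs.

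Take any such pair and write $\nu=\nu_q+2^{q+1}k$ with $k\in\bb{Z}^4$. Then $\theta=\theta^0-k$ and $\sum_ik_i=0$. Every monomial of $P$ has all exponents at least $1$ (\Cref{prp:P}), so $\nu_i\geq2^q-1$ for every $i$. Comparing this with the entries of $\nu_q$ gives
\[
 k_1\geq0,\qquad k_2\geq0,\qquad k_3\geq-1,\qquad k_4\geq-1 .
\]
For $k_2$, note that $k_2\geq-(2^{q+1}-2)/2^{q+1}>-1$; the bounds for $k_3$ and $k_4$ are obtained the same way. Consequently
\[
 \theta_1\leq R_j,\qquad \theta_2\leq R_j-1,\qquad \theta_3\leq R_j-1,\qquad \theta_4\leq R_j-2 .
\]
By \Cref{cor:distinct}, $\theta$ has distinct entries, and $|\theta|=|\theta^0|$. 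Its sorted version is a strict partition with largest part at most $R_j$, so \Cref{thm:lower}(ii) forces it to be the staircase $\theta^0$. Thus $\theta$ is a permutation of $(R_j,R_j-1,R_j-2,R_j-3)$ satisfying the coordinatewise bounds above.

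Now I enumerate the permutations that satisfy these bounds. The value $R_j$ must sit in position $1$. The value $R_j-1$ must sit in position $2$ or $3$. The remaining values then fit in only one way each time, since $\theta_4\leq R_j-2$. This leaves only $\theta^0$ and $\theta'=(R_j,R_j-2,R_j-1,R_j-3)$. For $\theta'$ we have $k=(0,1,-1,0)$, so
\[
 \nu=(2^q-1,\;5\cdot2^q-3,\;3\cdot2^q-6,\;5\cdot2^q-4).
\]
Swapping its second and third entries gives exactly $C_q-(1,6,3,4)$, which is not in $\supp P^{2^q-1}$ by \Cref{cor:two}. Since $P^{2^q-1}$ is symmetric, $\nu\notin\supp P^{2^q-1}$ either. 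Hence the only contributing pair is $(\theta^0,\nu_q)$, and $[x^T]P^m=1$.

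The delicate step is the exclusion of the one competing permutation $\theta'$. It depends on the vanishing coefficient $c_q(1,6,3,4)=0$ from \Cref{cor:two}, which is exactly why that coefficient was computed. Everything else is bookkeeping with the lower bound $\nu_i\geq2^q-1$ together with \Cref{thm:lower}(ii).
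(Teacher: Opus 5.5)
Your overall route is the paper's: Frobenius splitting, reducing $\theta$ to a permutation of $\theta^0$ via \Cref{thm:lower}(ii), and killing the competitor $\theta'$ with $c_q(1,6,3,4)=0$. But the enumeration step has a genuine gap. The sentence ``The remaining values then fit in only one way each time, since $\theta_4\le R_j-2$'' is false. Both $R_j-2$ and $R_j-3$ satisfy $\theta_4\le R_j-2$, so your four bounds $\theta_1\le R_j$, $\theta_2\le R_j-1$, $\theta_3\le R_j-1$, $\theta_4\le R_j-2$ do not force $R_j-3$ into the last position.

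Consequently the permutations $(R_j,R_j-1,R_j-3,R_j-2)$ and $(R_j,R_j-3,R_j-1,R_j-2)$ also satisfy your bounds. They have $k=(0,0,1,-1)$ and $k=(0,2,-1,-1)$, that is,
\[
\nu=(2^q-1,\;3\cdot2^q-3,\;7\cdot2^q-6,\;3\cdot2^q-4)\quad\text{and}\quad\nu=(2^q-1,\;7\cdot2^q-3,\;3\cdot2^q-6,\;3\cdot2^q-4).
\]
The first of these is compatible with $\nu_i\ge2^q-1$ for every $q\ge1$, and the second for every $q\ge2$. Nothing in your argument rules these pairs out.

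The missing ingredient is the upper bound. By \Cref{prp:P}, every exponent in a monomial of $P$ is at most $7$, so $\nu_i\le7(2^q-1)$. This gives $k_2\le1$, $k_3\le0$ and $k_4\le0$, hence $\theta_2\ge R_j-2$ and $\theta_3\ge R_j-2$. That forces $R_j-3$ into position $4$. Combined with your lower bounds, only $\theta^0$ and $\theta'$ remain. Equivalently, the two extra candidates above have an entry $7\cdot2^q-6$ or $7\cdot2^q-3$, which exceeds $7\cdot2^q-7$. This is exactly how the paper obtains $a_4=3$. Once you add this step, the rest of your proof, including the exclusion of $\theta'$ through \Cref{cor:two} and symmetry, goes through as written.
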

\begin{proof}
By the definition of $\nu_q$,
\[
 T=2^{q+1}R_j(1,1,1,1)
   +(2^q-1,\;2^q-3,\;2^q-6,\;-2^q-4),
\]
so $T_1$ is the largest entry.

By Frobenius,
\[
 P^m
 =P^{2^j-1}(x_1^{2^{q+1}},\dots,x_4^{2^{q+1}})
  P^{2^q-1}.
\]
Thus $[x^T]P^m$ is the parity of the number of pairs
\[
 \theta\in\supp P^{2^j-1},\qquad
 \lambda\in\supp P^{2^q-1},\qquad
 2^{q+1}\theta+\lambda=T.
\]
Since $\lambda_i\geq2^q-1$ and
$T_i\leq2^{q+1}R_j+2^q-1$, every $\theta_i\leq R_j$.
By symmetry, \Cref{cor:distinct} and \Cref{thm:lower}(ii),
$\theta$ is a permutation of $\theta^0$.
Write
\[
 a :=\sigma(0,1,2,3), \sigma \in S_4,\qquad
 \theta=R_j(1,1,1,1)-a.
\]
Then
\[
 \lambda=\nu_q+2^{q+1}\bigl(a -(0,1,2,3)\bigr).
\]
The bounds $2^q-1\leq\lambda_i\leq7(2^q-1)$ fix the places of $0$ and $3$ in $a$. If $a_i=0$ for some $i\geq2$, then $\lambda_i$ is $2^q-3$, $2^q-6$, or $-2^q-4$, below the lower bound; so $a_1=0$. If $a_i=3$ for some $i\leq3$, then $\lambda_i$ is $7\cdot2^q-1$, $7\cdot2^q-3$, or $7\cdot2^q-6$, above the upper bound; so $a_4=3$. Hence $a=(0,1,2,3)$ or $a=(0,2,1,3)$.

In the second case,
\[
 \lambda=
 (2^q-1,\;5\cdot2^q-3,\;3\cdot2^q-6,\;5\cdot2^q-4).
\]
This is $C_q-(1,6,3,4)$ with its middle entries exchanged,
so it does not occur by \Cref{cor:two} and symmetry.
The only contributing pair is therefore
$(\theta^0,\nu_q)$, whose two factors occur by
\Cref{thm:AP} and \Cref{cor:two}. Thus $[x^T]P^m=1$.
\end{proof}

\subsection{Proof of the main theorem}

\begin{proof}[Proof of \Cref{thm:main}]
Part~(i) is \Cref{cor:dyadic}.

For part~(ii), put
\[
 a:=2^{q+1}(2^j-1),\qquad b:=a+2^q-1.
\]
By \Cref{cor:double,cor:dyadic},
$d^*(a)=2^{q+1}R_j$.
If $q=0$, then $r=0$ and the claim follows.
For $q\geq1$, \Cref{thm:T} gives
\[
 d^*(b)\leq2^{q+1}R_j+2^q-1=d^*(a)+b-a.
\]
Thus \Cref{cor:interval} yields
\[
 d^*(a+r)=2^{q+1}R_j+r,
 \qquad 0\leq r\leq2^q-1.
\]

For part~(iii), equality holds when $m=2^j-1$ by part~(i).
Otherwise, write $m=2^{q+1}(2^j-1)+r$ as in part~(ii).
Then
\[
 \begin{aligned}
 d^*(m)-R(m,4,3)
 &=3\cdot2^q+r-\left\lceil\frac{7r+3}{2}\right\rceil\\
 &=3\cdot2^q-\left\lceil\frac{5r+3}{2}\right\rceil\\
 &\geq3\cdot2^q-\frac{5r+4}{2}\\
 &\geq\frac{2^q+1}{2}>0,
 \end{aligned}
\]
where the penultimate inequality uses $r\leq2^q-1$.
\end{proof}

\noindent {\bf {Acknowledgments.}}
Platon Khamatov and Alexey Fakhrutdinov would like to express their sincere gratitude to the organizers of
the Research Experience Program for Undergraduates 2026 (LIPS) for providing an excellent research platform and continuous support. The program
was hosted by the Laboratory of Combinatorial and Geometric Structures
at the Phystech School of Applied Mathematics and Informatics, MIPT.

\newpage

 A. A. Fakhrutdinov, MIPT Centre of Pure Mathematics.

 {\it E-mail address:} fakhrutdinov.aa@phystech.edu

\medskip

 P. A. Khamatov, MIPT School of Applied Mathematics and Computer Science.

 {\it E-mail address:} khamatov.pa@phystech.edu

\medskip

 O. R. Musin,  University of Texas Rio Grande Valley, School of Mathematical and
 Statistical Sciences, One West University Boulevard, Brownsville, TX, 78520, USA.

 {\it E-mail address:} oleg.musin@utrgv.edu

\end{document}